\newcommand{\Prim}{\mathrm{Prim}}
\newcommand{\vol}{\mathrm{vol}}
\newcommand{\Tr}{\mathrm{Tr}}
\newcommand{\tr}{\mathrm{tr}}
\newcommand{\li}{\mathrm{li}}
\newcommand{\as}{\quad\text{as}\quad}
\newcommand{\tinf}{\to\infty}
\newcommand{\disp}{\displaystyle}
\newcommand{\bsla}{\backslash}
\newcommand{\cO}{\mathcal{O}}
\newcommand{\cT}{\mathcal{T}}
\newcommand{\cU}{\mathcal{U}}
\newcommand{\bC}{\mathbb{C}}
\newcommand{\bR}{\mathbb{R}}
\newcommand{\bQ}{\mathbb{Q}}
\newcommand{\bZ}{\mathbb{Z}}
\newcommand{\bcT}{\hat{\cT}}
\newcommand{\ccT}{\check{\cT}}
\newcommand{\bX}{\bar{X}}
\newcommand{\noi}{\noindent}
\renewcommand{\Re}{\mathrm{Re}}
\newcommand{\divset}{\hspace{3pt}|\hspace{3pt}}
\newcommand{\bigdivset}{\hspace{3pt}\big|\hspace{3pt}}
\newcommand{\Bigdivset}{\hspace{3pt}\Big|\hspace{3pt}}
\newcommand{\biggdivset}{\hspace{3pt}\bigg|\hspace{3pt}}
\newcommand{\gam}{\gamma}
\newcommand{\Gam}{\Gamma}
\newcommand{\btheta}{\bar{\theta}}
\newcommand{\sr}{\mathrm{SL}_2(\bR)}
\newcommand{\sz}{\mathrm{SL}_2(\bZ)}
\newcommand{\vapt}{\vspace{3pt}}
\newcommand{\vbpt}{\vspace{6pt}}
\newcommand{\vcpt}{\vspace{12pt}}
\newtheorem{thm}{Theorem}[section]
\newtheorem{prop}[thm]{Proposition}
\newtheorem{lem}[thm]{Lemma}
\numberwithin{equation}{section}
\title{Universality theorems of the Selberg zeta functions \\for arithmetic groups}
\author{Yasufumi Hashimoto}
\date{}
\begin{document}
\markboth
{Y. Hashimoto}
{Universality theorems of the Selberg zeta functions}
\pagestyle{myheadings}

\maketitle
\renewcommand{\thefootnote}{}
\footnote{MSC: primary: 11M36; secondary: 11F72}

\begin{abstract}
After Voronin proved the universality theorem of the Riemann zeta function in the 1970s, 
universality theorems have been proposed for various zeta and L-functions. 
Drungilas-Garunk\v{s}tis-Ka\v{c}enas' work at 2013 on the universality theorem of the Selberg zeta function 
for the modular group is one of them and is probably the first universality theorem 
of the zeta function of order greater than one. 
Recently, Mishou (2021) extended it by proving the joint universality theorem 
for the principal congruence subgroups. 
In the present paper, we further extend these works by proving the (joint) universality theorem 
for subgroups of the modular group and co-compact arithmetic groups derived 
from indefinite quaternion algebras, which is available in the region wider than the regions in the previous two works.
\end{abstract}

\section{Introduction}

\subsection{Universality theorem} 

The value distributions of the zeta functions in the critical strips 
are, in general, much more complicated than those in the regions of absolute convergence. 
One of the earliest remarkable works providing insight into the value distribution 
of the Riemann zeta function 
$$\zeta(s)=\prod_{p}(1-p^{-s})^{-1}, \qquad \Re{s}>1$$
was Bohr-Courant's result \cite{Bohr} in the 1910s that
the set $\{\zeta(\sigma+it)\in\bC\}_{t\in\bR}$ for $\frac{1}{2}< \sigma\leq 1$ is dense in $\bC$. 
In the 1970s, Voronin \cite{Voro75} made a breakthrough  
by proving the following theorem, called the {\it universality theorem}.
\begin{thm} (Voronin \cite{Voro75}, 1975)  \label{univRZ}
Let $0<r<1/4$ and suppose that $f(s)$ is a non-vanishing analytic function in the interior of the disc $|s|\leq r$ 
and is continuous up to the boundary of this disc. 
Then, for any $\epsilon>0$, we have
$$
\liminf_{T\tinf}\frac{1}{T}\mu\left\{ \tau \in [0,T] \biggdivset  
\max_{|s|<r} \left|\zeta\left(s+\frac{3}{4}+i\tau\right)-f(s)\right|<\epsilon\right\}>0, 
$$
where $\mu$ is the Lebesgue measure on $\bR$.
\end{thm}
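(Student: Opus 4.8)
The plan is to reduce the statement to a denseness problem for $\log\zeta$ in a Hilbert space of analytic functions and then to convert that denseness into a positive-measure conclusion via equidistribution. Since $f$ is non-vanishing on the closed disc $|s|\le r$ and analytic in its interior, a holomorphic branch $\log f(s)$ exists there; and because $w\mapsto e^w$ is Lipschitz on bounded sets, it suffices to produce a set of $\tau$ of positive lower density on which $\log\zeta(s+\tfrac34+i\tau)$ is uniformly close to $\log f(s)$ on $|s|<r$. First I would fix $r<\rho<\tfrac14$ and pass to a Hilbert space $H$ of square-integrable analytic functions on the disc $|s|<\rho$; the role of $\rho<\tfrac14$ is that after the shift by $\tfrac34$ the relevant arguments lie in the strip $\tfrac12<\Re s<1$, where $\zeta$ has no zeros and where $\log\zeta(s)=\sum_p\sum_{k\ge1}\tfrac1k p^{-ks}$ is available in mean square.

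The next step is truncation. Setting $u_p(s,\tau)=-\log(1-p^{-s-3/4-i\tau})$, a standard second-moment estimate for $\zeta$ on vertical lines in $(\tfrac12,1)$ shows that the mean over $\tau\in[0,T]$ of the $H$-distance between $\log\zeta(s+\tfrac34+i\tau)$ and the finite sum $\sum_{p\le N}u_p(s,\tau)$ tends to $0$ as $N\tinf$, uniformly in $T$. Writing $\omega_p=p^{-i\tau}$, matters reduce to showing that the set of functions $\sum_{p\le N}\bigl(-\log(1-\omega_p p^{-s-3/4})\bigr)$, as $N\tinf$ and the $\omega_p$ range over the unit circle, is dense in $H$; in particular it must be able to approximate $\log f$.

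The heart of the argument, and the step I expect to be the main obstacle, is this denseness (\emph{rearrangement}) theorem. To leading order the summands are scalar multiples of the vectors $v_p(s)=p^{-s-3/4}\in H$, so the problem is whether phase-rotated finite subsums $\sum\omega_p v_p$ can reach an arbitrary target in the closed linear span. I would invoke a rearrangement theorem of Pechersky type for $H$ regarded as a real Hilbert space: if the vectors satisfy $\sum_p\|v_p\|^2<\infty$ while $\sum_p|\langle v_p,e\rangle|=\infty$ for every nonzero $e$, then suitable phase choices make the series converge to any prescribed vector in the span. The placement of the disc supplies both hypotheses: the convergence $\sum_p\|v_p\|^2<\infty$ is exactly where the bound $r<\tfrac14$ enters, since it keeps the disc inside $\Re s>\tfrac12$, whereas $\sum_p|\langle v_p,e\rangle|=\infty$ ultimately rests on the divergence $\sum_p p^{-1}=\infty$. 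Verifying this last, spanning-type condition for the family $\{v_p\}$ is the technical crux.

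Finally I would pass from the existence of good phases to a statement about $\tau$ by equidistribution. Because $\{\log p:p\text{ prime}\}$ is linearly independent over $\bQ$, the Kronecker--Weyl theorem shows that $\tau\mapsto\bigl(\tfrac{\tau\log p}{2\pi}\bmod 1\bigr)_{p\le N}$ is uniformly distributed on the torus $(\bR/\bZ)^{\pi(N)}$. Hence, for the phases $(\omega_p)_{p\le N}$ produced above, the set of $\tau\in[0,T]$ with $p^{-i\tau}$ within $\delta$ of $\omega_p$ for all $p\le N$ has positive lower density as $T\tinf$. Combining this with the mean-square truncation bound through a Chebyshev-type argument to discard the exceptional $\tau$ yields a set of positive lower density on which $\log\zeta(s+\tfrac34+i\tau)$ is within $\epsilon$ of $\log f(s)$, and exponentiating gives the theorem.
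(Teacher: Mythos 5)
The paper does not prove this statement at all: it is Voronin's 1975 theorem, quoted as background with a citation to \cite{Voro75}, so there is no internal proof to compare your attempt against. Your outline follows the standard Voronin--Bagchi route (reduce to $\log\zeta$, truncate to an Euler product over $p\le N$ in a Bergman space on a slightly larger disc, apply a Pechersky-type rearrangement theorem to choose phases, then Kronecker--Weyl plus a Chebyshev argument), which is indeed how the theorem is proved in the literature, e.g.\ in \cite{Kara}, \cite{Bagchi}, \cite{Laur}.

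Two points in your sketch are genuine gaps rather than routine omissions. First, you justify working with $\log\zeta(s+\tfrac34+i\tau)$ by saying the shifted arguments lie in the strip $\tfrac12<\Re s<1$ ``where $\zeta$ has no zeros''; that assertion is the Riemann Hypothesis, not a known fact. The actual proof must invoke a zero-density estimate (any bound showing $N(\sigma,T)=o(T)$ for fixed $\sigma>\tfrac12$ suffices) to conclude that the set of $\tau\in[0,T]$ for which $\zeta(s+\tfrac34+i\tau)$ vanishes somewhere in the disc has measure $o(T)$; only on the complement is a branch of $\log\zeta(s+\tfrac34+i\tau)$ defined, and the mean-square truncation estimate has to be carried out on that complement. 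Without this step the quantity you average is not even well defined for all $\tau$. Second, the hypothesis $\sum_p|\langle v_p,e\rangle|=\infty$ for every $e\neq0$, which you correctly identify as the crux of the denseness step, is not a soft consequence of $\sum_p p^{-1}=\infty$: one must represent the functional $e$ by a complex measure $\mu$ supported on the closed disc, study the exponential sum $h(z)=\int e^{-wz}\,d\mu(w)$ along the primes, and combine the prime number theorem with a Bernstein--Plancherel--P\'olya type growth argument to show $h$ cannot decay too fast on the primes unless $\mu=0$. As written, your proposal is a correct road map of the known proof, but both of these essential steps are left unproved.
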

Note that the region $|s|\leq r$ in the theorem above was improved to 
a compact subset $K$ of the strip $\{1/2<\Re{s}<1 \}$ with connected complement \cite{Bagchi,Bagchi2}. 
After that, the universality theorems have been proved for various zeta and $L$-functions 
(see, e.g. \cite{Laur,Matsu,Ste} for the progress of the studies of value distributions and universality theorems). 
At 2013, Drungilas-Garunk\v{s}tis-Ka\v{c}enas \cite{Drun} proved the universality theorem 
of the Selberg zeta function associated with the modular group, 
which is, to the best of our knowledge, the first universality theorem 
of the zeta function of order greater than one.

\subsection{Universality theorem of the Selberg zeta function}

Let $H:=\{x+y\sqrt{-1}\divset x,y\in\bR,y>0\}$ be the upper half plane 
and $\Gam$ a discrete subgroup of $\sr$ with $\vol(\Gam\bsla H)<\infty$. 
Denote by $\Prim(\Gam)$ the set of primitive hyperbolic conjugacy classes 
of $\Gam$ and $N(\gam)$ the square of the larger eigenvalue of $\gam$. 
The Selberg zeta function for $\Gam$ is defined by 
\begin{align*}
Z_{\Gam}(s):=\prod_{\gam\in\Prim(\Gam),n\geq0}
(1-N(\gam)^{-s-n}),\qquad \Re{s}>1.
\end{align*}
It is well known that $Z_{\Gam}(s)$ is analytically continued to the whole complex plane 
as a meromorphic function of order $2$ 
and its singular points in $\{1/2<\Re{s}\leq 1\}$ are only a simple zero at $s=1$
and at most a finite number of zeros on the real line (see, e.g. \cite{He}). 
The following analogue of the prime number theorem, 
called the prime geodesic theorem, holds for any $\eta>0$.
\begin{align}\label{pgt}
\begin{split}
\pi_{\Gam}(x):=&\#\{ \gam\in \Prim(\Gam) \divset N(\gam)<x \}\\
=&\li(x)+\sum_{\frac{1}{2}<\rho<1}\li(x^{\rho})+O_{\eta,\Gam}\left(x^{\frac{3}{4}+\eta}\right), \as x\tinf, 
\end{split}
\end{align}
where $\li(x):=\int_{2}^{x}(\log{t})^{-1}dt$ 
and $\{\frac{1}{2}<\rho<1\}$ is the set of at most a finite number of zeros of $Z_{\Gam}(s)$ on the real line. 
Note that the exponent $3/4$ of the error term of the prime geodesic theorem above
was improved to $\frac{25}{36}$  for the modular group 
and the principal congruence subgroups \cite{SY,BF,Cherubini},
and to $\frac{7}{10}$  for the congruence subgroups of the modular group
and co-compact arithmetic groups derived from indefinite quaternion groups \cite{LRS,Ko}. 
We also note that, if the Lindel\"{o}f conjecture hols for the Dirichlet $L$-function, 
it can be improved to $2/3$ for the modular group and the principal congruence subgroups. 

At 2013, Drungilas-Garunk\v{s}tis-Ka\v{c}enas \cite{Drun} proved the following universality theorem 
of $Z_{\Gam}(s)$ associated with the modular group. 
\begin{thm} (Drungilas-Garunk\v{s}tis-Ka\v{c}enas \cite{Drun}, 2013) \label{univD}
Let $\frac{1}{2}<\alpha<1$ be the exponent of the error term of the prime geodesic theorem for $\sz$, i.e. 
$\alpha$ is the constant satisfying
$$\pi_{\sz}(x)-\li(x)\ll_{\eta} x^{\alpha+\eta}, \as x\tinf$$ for any $\eta>0$,
$K$ a compact subset of the strip $\{ \frac{\alpha+1}{2}<\Re{s}<1 \}$ 
with connected complement, and $f(s)$ a non-vanishing function which is continuous in $K$ and analytic 
in the interior of $K$. Then, for any $\epsilon>0$, we have
\begin{align*}
\liminf_{T\tinf}\frac{1}{T}\mu\left\{
\tau\in [0,T] \Bigdivset \max_{s\in K}\left| Z_{\sz}(s+i\tau)-f(s)\right|<\epsilon
\right\}>0.
\end{align*}
\end{thm}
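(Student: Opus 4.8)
The plan is to follow the now-standard architecture for universality proofs in the Bagchi–Voronin tradition, adapting it to the Selberg zeta function $Z_{\sz}(s)$. The essential point is that $Z_{\sz}(s)$ has an Euler-type product over $\Prim(\sz)$ with "norms" $N(\gam)$ playing the role that primes play for $\zeta(s)$, so the distribution of the $N(\gam)$ governed by the prime geodesic theorem \eqref{pgt} should substitute for the prime counting function. First I would take logarithms and work with $\log Z_{\sz}(s)=\sum_{\gam\in\Prim(\sz),\,n\geq 0}\sum_{k\geq 1}\frac{1}{k}N(\gam)^{-k(s+n)}$, valid for $\Re{s}>1$, and reduce the outer product (the $n\geq 0$ factors) to the single principal factor $n=0$, since the contributions from $n\geq 1$ are absolutely convergent and analytic well to the left of $\Re{s}=1$ and do not affect universality in the strip $\{\frac{\alpha+1}{2}<\Re{s}<1\}$.

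The second step is a \emph{positive-density} or \emph{mean-value} argument for the associated Dirichlet-type series. I would form truncated products $Z_{\sz,M}(s)$ over classes with $N(\gam)\leq M$ and control the tail using the prime geodesic theorem \eqref{pgt}: the choice of the left edge $\Re{s}>\frac{\alpha+1}{2}$ is exactly what makes the relevant second moment $\frac{1}{T}\int_0^T|\log Z_{\sz}(\sigma+it)-\log Z_{\sz,M}(\sigma+it)|^2\,dt$ small, because the square-root cancellation threshold in the remainder term $O(x^{\alpha+\eta})$ translates, after partial summation, into convergence of $\sum_\gam N(\gam)^{-2\sigma}$ type quantities precisely for $\sigma>\frac{\alpha+1}{2}$. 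This is the step where the specific exponent $\alpha$ enters and where the width of the admissible strip is determined.

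Third comes the probabilistic heart of the argument. I would introduce independent random variables $\theta_\gam$ uniformly distributed on the unit circle, indexed by primitive classes, and consider the random Euler product $\sum_\gam\sum_k \frac{1}{k}e(k\theta_\gam)N(\gam)^{-ks}$; the task is to show this random series, as a function of $s$ on $K$, has support (in the space of analytic functions on the interior of $K$ with the topology of uniform convergence on compacta) equal to all of the non-vanishing analytic functions plus their logarithms. The key input here is a functional-analytic denseness/rearrangement theorem of Pechersky–Bagchi type, whose hypothesis is that the "frequencies" $\log N(\gam)$ are sufficiently dense and that $\sum_\gam N(\gam)^{-\sigma}$ diverges at the left edge while $\sum_\gam N(\gam)^{-2\sigma}$ converges — both of which again follow from \eqref{pgt}. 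One must also verify that the $\log N(\gam)$ do not conspire into too many rational relations; here, unlike the case of $\log p$ for primes, the values $\log N(\gam)$ are not linearly independent over $\bQ$ (there are many coincidences among geodesic lengths), so the denseness argument must be carried out with multiplicities, grouping classes with equal norm.

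The last step is to transfer the denseness statement for the random model to a statement about the genuine flow $\tau\mapsto Z_{\sz}(s+i\tau)$ via a limit-theorem / ergodicity argument (a Birkhoff-type or Weyl-equidistribution argument on the infinite torus), combined with Mergelyan's approximation theorem to pass from non-vanishing analytic $f$ to $\log f$ and back, and with Rouché's theorem to convert the approximation of $\log f$ into a non-vanishing approximation of $f$ itself; this is where the hypotheses that $f$ is non-vanishing and that $K$ has connected complement are used. I expect the main obstacle to be precisely the handling of the repeated values of $N(\gam)$: because the geodesic length spectrum has unbounded multiplicities and is far from $\bQ$-linearly independent, the clean independence arguments available for $\zeta(s)$ must be replaced by a more careful analysis showing that the effective number of independent frequencies still grows fast enough, controlled quantitatively by \eqref{pgt}, to guarantee the denseness of the random product on $K$.
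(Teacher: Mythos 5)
This statement is quoted from Drungilas--Garunk\v{s}tis--Ka\v{c}enas and is not reproved in the paper; the closest internal comparison is the proof of Theorem \ref{thm1} in \S 4, which strengthens it and follows the same architecture. Your sketch reproduces that architecture correctly at the top level (logarithm and reduction to the $n=0$ factor; a mean-square truncation whose threshold $\frac{\alpha+1}{2}$ comes from Landau's theorem applied with the prime geodesic error term; a denseness/rearrangement step; transfer by Weyl equidistribution plus Mergelyan and the exponential trick for non-vanishing $f$). But there is a genuine gap at exactly the point you flag as "the main obstacle": you observe that the frequencies $\log N(\gam)$ are not $\bQ$-linearly independent and say the argument "must be carried out with multiplicities," without supplying the mechanism that makes this work. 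Both the Pechersky-type rearrangement (Proposition \ref{appr}) and the equidistribution of $\tau\mapsto(\tau\lambda/2\pi)_{\lambda}$ on the torus (Lemma \ref{ww}, hence Proposition \ref{mu}) require an honest $\bQ$-linearly independent frequency set that still carries almost all of the Dirichlet series; merely "grouping classes with equal norm" does not produce one, and a random-Euler-product/support argument hits the same wall, since the limit measure is a product measure only under independence.

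The missing idea is arithmetic. One groups hyperbolic classes by trace, so that $\log Z_{\sz}(s)$ becomes a generalized Dirichlet series over integers $n\geq 3$ with frequencies $2\log\epsilon(n)$, $\epsilon(n)=\tfrac12(n+\sqrt{n^2-4})$, and multiplicities $m_{\sz}(n)$ (class numbers of indefinite binary quadratic forms). One then discards the set of $n$ for which $\epsilon(n)$ is a proper power of some $\epsilon(n_0)$; by Rudnick's lemma, any $\bQ$-linear relation among the $\log\epsilon(n)$ forces the $\epsilon(n)$ into a single real quadratic field, where each is a power of the fundamental unit $\epsilon_0(D)=\epsilon(n_0)$, so the surviving set $\cT$ has $\{\log\epsilon(n)\divset n\in\cT\}$ linearly independent over $\bQ$ (Lemma \ref{lemlin}(1)). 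The discarded traces number only $O(x^{1/2+\eta})$ up to $x$ (Lemma \ref{lemlin}(2)), so their contribution converges absolutely for $\Re{s}>3/4$ and the series over $\cT$ still satisfies the packing condition needed for Proposition \ref{appr}. Without this trace-level linear-independence input your steps three and four cannot be executed. A minor further imprecision: the sum $\sum_{\gam}N(\gam)^{-2\sigma}$ already converges for $\sigma>1/2$; the exponent $\frac{\alpha+1}{2}$ arises not from its convergence but from Landau's mean-square theorem for the tail $\frac{1}{T}\int_0^T|\log Z_{\sz}(\sigma+it)-\log Z_{\sz,M}(\sigma+it)|^2\,dt$, which is where the error term $O(x^{\alpha+\eta})$ of \eqref{pgt} actually enters (and which the present paper replaces by the pointwise multiplicity bound $m_{\Gam}(n)\ll n^{1+\eta}$ to widen the strip to $\Re{s}>\tfrac{5}{6}$).
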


Note that the range $\{ \frac{\alpha+1}{2}<\Re{s}<1 \}$ 
in the universality theorem above was given by 
Landau's formula for square integrals of the Dirichlet series 
(see Section 233--226 in \cite{Lan}). 
Since $\alpha\leq \frac{25}{36}$ holds for $\Gam=\sz$ \cite{SY,BF}, 
we see that the universality theorem above holds in $\{\frac{61}{72}<\Re{s}<1 \}$. 
We also note that it will be improved to  $\{\frac{5}{6}<\Re{s}<1 \}$ 
if the Lindel\"{o}f conjecture for the Dirichlet $L$-function holds. 
Recently, Mishou \cite{Mishou} extended Theorem \ref{univD} by proving 
the {\it joint universality theorem} 
for the principal congruence subgroups 
$$\bar{\Gam}(N):=\left\{ \gam\in \sz \Bigdivset \gam\equiv \pm\begin{pmatrix}1 & \\ & 1 \end{pmatrix} \bmod{N} \right\}$$ 
of the modular group.
\begin{thm} \label{Mi} (Mishou \cite{Mishou}, 2021)
Let $r\geq1$ be an integer, $N_{0}=1$ and
$N_{1},\dots,N_{r}\geq3$ the integers relatively prime to each other. 
Denote by $\frac{1}{2}<\alpha<1$ the exponent of the prime geodesic theorem for the principal congruence subgroup of $\sz$, i.e. 
$\alpha$ is the constant satisfying 
$$\pi_{\Gam}(x)-\li(x)\ll_{\Gam,\eta} x^{\alpha+\eta}, \as x\tinf$$ 
for any $\eta>0$ and any principal congruence subgroup $\Gam$ of $\sz$. 
For $0\leq j\leq r$, let $K_{j}$ be a compact subset of the strip $\{\frac{\alpha+1}{2}<\Re{s}<1 \}$ 
with connected complement and $f_{j}(s)$ a non-vanishing function which is continuous in $K_{j}$ and analytic  
in the interior of $K_{j}$. Then, for any $\epsilon>0$, we have 
\begin{align*}
\liminf_{T\tinf}\frac{1}{T}\mu\left\{
\tau\in [0,T] \Bigdivset \max_{0\leq j\leq r}\max_{s\in K_{j}}\left| Z_{\bar{\Gam}(N_{j})}(s+i\tau)-f_{j}(s)\right|<\epsilon
\right\}>0.
\end{align*}
\end{thm}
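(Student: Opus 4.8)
The plan is to follow the probabilistic (Bagchi-type) reformulation of universality, treating the joint statement as a convergence-in-distribution result paired with a support computation, and to isolate the coprimality of the levels as the source of the required joint independence. First I would record the logarithmic expansion
$$
\log Z_{\bar{\Gam}(N)}(s) = -\sum_{\gam \in \Prim(\bar{\Gam}(N))} \sum_{k \geq 1} \sum_{n \geq 0} \frac{1}{k}\, N(\gam)^{-k(s+n)},
$$
whose dominant part is the Dirichlet series $-\sum_{\gam} N(\gam)^{-s}$ with frequencies $\log N(\gam)$. Using the prime geodesic theorem with exponent $\alpha$ and Landau's mean-square formula for Dirichlet series, I would establish the second-moment estimates that hold precisely on $\{\frac{\alpha+1}{2} < \Re{s} < 1\}$; these pin down the region of the theorem and supply the tail bounds needed to truncate the series. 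Since $\bar{\Gam}(N_0)=\sz$ for $N_0=1$, one component is just the modular group treated in Theorem \ref{univD}.

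The analytic core is a joint limit theorem. Viewing $\tau \mapsto (Z_{\bar{\Gam}(N_0)}(\cdot + i\tau), \dots, Z_{\bar{\Gam}(N_r)}(\cdot + i\tau))$ as a random element of the product of the spaces of analytic functions on strips $D_j \supset K_j$, I would show that its distribution under normalized Lebesgue measure on $[0,T]$ converges as $T\tinf$ to the law of a tuple of \emph{independent} random analytic functions. The decisive point is joint independence across the levels. Here I would use the Artin-type factorization $Z_{\bar{\Gam}(N)}(s) = \prod_{\rho} Z_{\sz,\rho}(s)^{\dim \rho}$ over the irreducible representations $\rho$ of the finite quotient $\psl(\bZ/N\bZ)$, together with the Chinese remainder isomorphism $\slt(\bZ/(N_1 \cdots N_r)\bZ) \cong \prod_j \slt(\bZ/N_j\bZ)$ for pairwise coprime $N_j$. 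Because the irreducibles of a direct product are exactly the tensor products of irreducibles of the factors, the twisted zeta functions entering different $Z_{\bar{\Gam}(N_j)}$ involve representation-theoretically disjoint data, which I would convert into independence of the associated random Euler factors on the limiting torus.

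Next I would prove the support theorem: the support of each marginal limiting random function is the whole set of non-vanishing functions continuous on $K_j$ and analytic in its interior. This is the Voronin--Bagchi denseness argument adapted to the geodesic setting as in Theorem \ref{univD}: one approximates $\log f_j$ by finite truncations of the random Dirichlet series, invoking a Hilbert-space rearrangement theorem together with the density of $\{N(\gam)^{-s}\}$ in the relevant Bergman space and the divergence $\sum_{\gam} N(\gam)^{-2\sigma} = \infty$ guaranteed by the prime geodesic theorem. Combined with the joint independence from the limit theorem, each marginal support statement lifts to the assertion that the support of the joint law is the full product of these sets of non-vanishing functions. Applying the joint limit theorem to the open neighbourhood $\{(g_0, \dots, g_r) : \max_j \max_{s \in K_j} |g_j(s) - f_j(s)| < \epsilon\}$, whose intersection with the support has positive measure, yields a positive lower limit for the normalized measure of the corresponding set of $\tau$, which is the assertion.

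I expect the main obstacle to be the joint independence step: one must verify that coprimality genuinely decouples the length spectra --- equivalently, that there are no unexpected multiplicative relations among the norms $N(\gam)$ drawn from different congruence subgroups --- and translate the representation-theoretic disjointness into a clean statement about equidistribution on the limiting torus. Controlling the twisted prime geodesic theorems uniformly in the representations $\rho$, so that the truncation and second-moment estimates survive the factorization into $\prod_{\rho} Z_{\sz,\rho}^{\dim\rho}$ within the single strip $\{\frac{\alpha+1}{2}<\Re{s}<1\}$, is the technically delicate part.
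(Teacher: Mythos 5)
You should first note that the paper does not prove Theorem \ref{Mi} at all: it is quoted from Mishou, and the closest argument in the paper is the proof of the stronger Theorem \ref{thm2}, whose method (inherited from Drungilas--Garunk\v{s}tis--Ka\v{c}enas and Mishou) is the \emph{direct} one: truncate $\log Z_{\Gam}$ via the explicit formula (Lemma \ref{exp}), approximate $\log f_j$ by a twisted Dirichlet polynomial using the packing condition and Proposition \ref{appr}, realize the twists by shifts $\tau$ via Kronecker--Weyl equidistribution (Proposition \ref{mu}), and kill the tails by mean-square bounds (Lemma \ref{lemsq}). Your Bagchi-type ``limit theorem plus support theorem'' route is genuinely different, and in outline it is a reasonable alternative, but as written it has two real gaps.

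First, the multiplicative relations among the norms that you defer as ``the main obstacle'' are not a potential difficulty to be checked --- they are pervasive and must be engineered around. The map $\gam\mapsto N(\gam)$ is far from injective (the multiplicity $m_{\Gam}(n)$ is as large as $n^{1+\eta}$, Lemma \ref{lemclass}), and the distinct values $\epsilon(n)$ satisfy many relations ($\epsilon(7)=\epsilon(3)^2$, $\epsilon(14)=\epsilon(4)^2$, $\epsilon(18)=\epsilon(3)^3$, \dots). Consequently the random model with independent phases indexed by primitive geodesics, on which both your limit theorem and your support computation rest, does not exist. The paper's substitute is to index by traces, extract the linearly independent set $\cT$ (Lemma \ref{lemlin}), and show the discarded part converges absolutely for $\Re{s}>3/4$ (Lemma \ref{lemsq}(1)); you supply no analogue. (Relatedly, $\sum_{\gam}N(\gam)^{-2\sigma}$ \emph{converges} for $\sigma>1/2$ by the prime geodesic theorem; the divergence your denseness step actually needs is of the form of the packing condition \eqref{packing}, not the one you state.) Second, the claimed convergence to a tuple of \emph{independent} random analytic functions is false: under the Artin factorization all the $Z_{\bar{\Gam}(N_j)}$ are built from the same primitive geodesics of $\sz$, so the limiting random elements are deterministic functions of one and the same family of random phases and cannot be independent. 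What a correct joint argument needs is that the \emph{support} of the joint law is the full product, and the mechanism that delivers this --- in the paper, the nonemptiness and disjointness of the trace sets $\hat{T}_j$ (here $\{n\equiv\pm2\bmod N_j^2\}$ after removing overlaps), which gives each level its own block of $\bQ$-linearly independent frequencies to twist --- is exactly the step you leave as an unproven conversion of ``representation-theoretic disjointness'' into equidistribution. Until that conversion and the treatment of the norm relations are carried out, the proposal does not constitute a proof.
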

Note that, since $\alpha\leq \frac{25}{36}$ holds for the principal congruence subgroups \cite{Cherubini}, 
the range that the joint universality theorem above holds is also $\{ \frac{61}{72}<\Re{s}<1\}$.

\subsection{Main results}
In the present paper, we study the universality of the Selberg zeta function 
for subgroups of the modular group $\sz$ and 
the quaternion groups defined as follows.

Let $a,b$ be square-free and relatively prime positive integers and  
$B:=\bQ+\bQ\alpha+\bQ\beta+\bQ\alpha\beta$ the quaternion algebra over $\bQ$
with $\alpha^2=a$, $\beta^2=b$, $\alpha\beta=-\beta\alpha$. 
Suppose that $B$ is division and  
fix a maximal order $\mathcal{O}$ of $B$. 
Then the group $\mathcal{O}^1$ consisting 
of $q_0+q_1\alpha+q_2\beta+q_3\alpha\beta\in \cO$ 
with $q_0^2-q_1^2a-q_2^2b+q_3^2ab=1$  
can be identified with 
a co-compact discrete subgroup $\Gam=\Gamma_{\mathcal{O}}$ 
of $\sr$ by the map
\begin{align*}
q_0+q_1\alpha+q_2\beta+q_3\alpha\beta\mapsto &
\begin{pmatrix} 
q_0+q_1\sqrt{a}&q_2\sqrt{b}+q_3\sqrt{ab}\\
q_2\sqrt{b}-q_3\sqrt{ab}&q_0-q_1\sqrt{a}
\end{pmatrix}.
\end{align*} 

The main theorem in the present paper is the following universality theorem of the Selberg zeta function.

\begin{thm}\label{thm1} 
Let $\Gam$ be a (not necessarily congruence) subgroup of the modular group $\sz$
or a co-compact arithmetic group $\Gamma_{\mathcal{O}}$ of finite index, 
$K$ a compact subset of the strip $\{\frac{5}{6}<\Re{s}<1 \}$ 
with connected complement and 
$f(s)$ a non-vanishing function which is continuous in $K$ and analytic 
in the interior of $K$. Then, for any $\epsilon>0$, we have
\begin{align*}
\liminf_{T\tinf}\frac{1}{T}\mu\left\{
\tau\in [0,T] \Bigdivset \max_{s\in K}\left| Z_{\Gam}(s+i\tau)-f(s)\right|<\epsilon
\right\}>0.
\end{align*}
\end{thm}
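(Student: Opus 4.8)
\emph{Proof proposal.} The plan is to place the statement inside the probabilistic (Bagchi-type) framework used by Drungilas-Garunk\v{s}tis-Ka\v{c}enas and Mishou, and to isolate the one analytic input that governs the left edge of the strip. Write $D=\{5/6<\Re s<1\}$ and let $H(D)$ be the space of holomorphic functions on $D$ with the topology of uniform convergence on compacta. First I would record the generalized Dirichlet series
$$\log Z_\Gam(s)=\sum_{\gam\in\Prim(\Gam)}\sum_{n\geq0}\sum_{k\geq1}\frac{-1}{k}\,N(\gam)^{-k(s+n)},\qquad \Re s>1,$$
whose leading part $-\sum_{\gam}N(\gam)^{-s}$ is a Dirichlet series in the geodesic primes $N(\gam)$; the at most finitely many real zeros of $Z_\Gam$ in $(1/2,1)$ are dealt with exactly as in the earlier works and do not affect the conclusion. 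The object to study is the $H(D)$-valued random element $\tau\mapsto Z_\Gam(\,\cdot\,+i\tau)$ with $\tau$ distributed uniformly on $[0,T]$.

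The core is a limit theorem: as $T\tinf$ this random element converges weakly to a probability measure $P_Z$ on $H(D)$. I would prove it by the standard truncation scheme---replace $\log Z_\Gam$ by its partial sum $\log Z_{\Gam,Y}$ over $N(\gam)\leq Y$, use the equidistribution of $(N(\gam)^{-i\tau})_{\gam}$ on the closure of its orbit in the infinite torus to obtain the limiting distribution of the truncation, and then send $Y\tinf$---provided the tail is negligible in mean square:
$$\lim_{Y\tinf}\,\limsup_{T\tinf}\,\frac{1}{T}\int_0^T\big|\log Z_\Gam(\sigma+it)-\log Z_{\Gam,Y}(\sigma+it)\big|^2\,dt=0,$$
uniformly for $\sigma$ in compact subsets of $(5/6,1)$. \emph{This mean-square bound is where the wider region is won, and it is the step I expect to be the main obstacle.} Instead of inserting the pointwise prime geodesic exponent into Landau's mean-square formula---which gives only the abscissa $\tfrac{\alpha+1}{2}$, necessarily larger than $\tfrac{5}{6}$ since unconditionally $\alpha>\tfrac23$ for these groups---I would route the mean square through the error term $E_\Gam(x)=\pi_\Gam(x)-\li(x)$ and control it by the \emph{second moment} $\int_1^X|E_\Gam(x)|^2\,x^{-1}\,dx$. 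Unconditionally this second moment is of Lindel\"{o}f quality, i.e.\ it behaves as if $E_\Gam(x)\ll x^{2/3}$, so the tail vanishes in mean square throughout $D$, reproducing the previously conditional range without assuming the Lindel\"{o}f hypothesis.

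With the limit theorem in place I would identify the support of $P_Z$ as $\{g\in H(D): g\equiv0\ \text{or}\ g(s)\neq0\ \text{on}\ D\}$. This is the Euler-product step: after taking logarithms it suffices to show that the phase-twisted tail $\sum_{\gam}c_\gam N(\gam)^{-s}$ is dense in $H(D)$, which I would obtain from the Hilbert-space rearrangement theorem of Pechersky. Its hypotheses hold because, on compact subsets of $D$, one has $\sum_{\gam}\|N(\gam)^{-s}\|^2<\infty$ while $\sum_{\gam}\|N(\gam)^{-s}\|=\infty$, both being immediate consequences of the prime geodesic theorem (the convergence from $2\sigma>1$, the divergence from $\sigma<1$). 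Since $f$ is non-vanishing I may write $f=\exp g$ with $g\in H(D)$, approximate $g$ uniformly on $K$ by elements of the support via Mergelyan's theorem (legitimate as $K$ has connected complement), and conclude that $\{h\in H(D):\max_{s\in K}|h(s)-f(s)|<\epsilon\}$ is open with positive $P_Z$-measure; the portmanteau theorem then yields the asserted positive lower density.

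It remains to secure the mean-square input for the two families. For a finite-index, possibly non-congruence, subgroup $\Gam\leq\sz$ I would use the factorization $Z_\Gam(s)=\prod_i Z_{\sz}(s,\rho_i)^{m_i}$ coming from $\Ind_\Gam^{\sz}\mathbf 1=\bigoplus_i m_i\rho_i$, and likewise pass to the ambient co-compact arithmetic group for finite-index subgroups of $\Gamma_{\cO}$; arithmeticity is precisely what makes the unconditional Lindel\"{o}f-on-average second moment available through the spectral large sieve and the associated automorphic estimates. The delicate point---and, together with the second-moment prime geodesic estimate itself, the crux of the whole argument---is to carry this through uniformly for the non-congruence constituents $\rho_i$, where Hecke theory is unavailable and one must extract the required cancellation in the second moment directly from the trace/Kuznetsov formula for $\Gam$.
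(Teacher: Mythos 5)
Your proposal is built around a different, and unfortunately unsupported, central analytic input. You propose to win the abscissa $\frac{5}{6}$ from an unconditional ``Lindel\"of-on-average'' second moment of the prime geodesic error term $E_{\Gam}(x)$, and you yourself flag this as the crux; but you would need it uniformly for arbitrary finite-index, possibly non-congruence, subgroups of $\sz$ and for the quaternion groups $\Gam_{\cO}$. No such estimate is available there: the Kuznetsov/spectral-large-sieve machinery behind mean-square improvements of the prime geodesic theorem is tied to congruence groups, and for the non-congruence constituents $\rho_i$ in your factorization of $Z_{\Gam}$ you offer no substitute beyond the remark that one ``must extract the required cancellation directly from the trace formula.'' That is precisely the step that is missing, so the argument does not close. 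The paper reaches $\frac{5}{6}$ by a different mechanism: it writes $\log Z_{\Gam}$ as a Dirichlet series over traces $n$ with coefficients $m_{\Gam}(n)$, which are essentially class numbers of indefinite binary quadratic forms, and uses the pointwise multiplicity bound $m_{\Gam}(n)\ll_{\eta,\Gam} n^{1+\eta}$ (Lemma \ref{lemclass}), valid for \emph{every} such $\Gam$ because multiplicities are controlled by those of the ambient group. Feeding this into the diagonal of the mean square of the tail series gives $Y^{3-4\sigma+\eta}$, the off-diagonal gives $T^{-1}x^{2-2\sigma+\eta}$, and the choice $x=T^{3}$ in Lemma \ref{lemsq}(2) produces the exponent $5-6\sigma$, hence the strip $\Re{s}>\frac{5}{6}$ --- no second moment of $E_{\Gam}(x)$ enters at all.

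A second, smaller gap: your limit-theorem and support-identification steps treat the frequencies $\log N(\gam)$ as if they were linearly independent over $\bQ$, like $\log p$, so that $(N(\gam)^{-i\tau})_{\gam}$ equidistributes on the full infinite torus. They are not: $N(\gam)$ depends only on $\tr{\gam}$, distinct primitive classes share the same norm (this is exactly why the multiplicities $m_{\Gam}(n)$ appear), and there are multiplicative relations such as $\epsilon(7)=\epsilon(3)^{2}$ and $\epsilon(18)=\epsilon(3)^{3}$. The closure of the orbit is therefore a proper subtorus, and both the Pechersky rearrangement step and the identification of the support of $P_{Z}$ require restricting to an independent subset of frequencies. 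The paper does this by introducing the set $\cT$ of traces whose $\epsilon(n)$ is not a proper power (Lemma \ref{lemlin}), and then must separately show that the complementary series over $\bar{\cT}$ converges absolutely for $\Re{s}>\frac{3}{4}$ (Lemma \ref{lemsq}(1)) so that it can be discarded. Your write-up does not address this, and without it the denseness and support claims do not follow. The Bagchi-style probabilistic packaging is otherwise fine and essentially equivalent to the paper's effective Drungilas--Garunk\v{s}tis--Ka\v{c}enas-style argument, but these two points --- the multiplicity bound replacing the second-moment input, and the linear-independence device --- are the substance of the proof and both are absent.
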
 

Note that the universality theorem above is available  in $\{\frac{5}{6}<\Re{s}<1 \}$ 
without the condition $\alpha\leq \frac{2}{3}$ for the corresponding $\Gam$. 
To improve it, we use the upper bound 
(Lemma 3.1 in \cite{HashSq} and Lemma \ref{lemclass} in this paper) of the number $m_{\Gam}(n)$,  
which is almost the same as the number of $\gam\in \Prim(\Gam)$ with the same $N(\gam)$.
Since its multiplicity is more informative than the error term of the prime geodesic theorem, 
we can obtain a better region than that in the previous work.
 
We also prove the following joint universality theorem 
as an improvement of Theorem \ref{Mi}. 

\begin{thm}\label{thm2} 
Let $r\geq1$ be an integer and $\Gam_{1},\dots,\Gam_{r}$ congruence subgroups of $\sz$.
Suppose that 
\begin{align}\label{condition}
\hat{T}_{j}:=\left\{n\in \Tr{\Gam_{j}} \divset \text{$n\not\in \Tr{\Gam_{i}}$ for $1\leq i\leq j-1$}\right\}\neq \emptyset 
\end{align}
for $1\leq j\leq r$,
where $\Tr{\Gam}:=\{\tr{\gam}>2 \divset \gam\in\Prim(\Gam)\}$. 
Let $K_{j}$ be a compact subset of the strip $\{\frac{5}{6}<\Re{s}<1 \}$ 
with connected complement  
and $f_{j}(s)$ a non-vanishing function continuous in $K_{j}$ and  analytic 
in the interior of $K_{j}$.
Then, for any $\epsilon>0$, we have
\begin{align*}
\liminf_{T\tinf}\frac{1}{T}\mu\left\{
\tau\in [0,T] \Bigdivset \max_{1\leq j\leq r}\max_{s\in K_{j}}\left| Z_{\Gam_{j}}(s+i\tau)-f_{j}(s)\right|<\epsilon
\right\}>0.
\end{align*}
\end{thm}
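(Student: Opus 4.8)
The plan is to follow the probabilistic method of Bagchi \cite{Bagchi}, in the form used for the Selberg zeta function by Drungilas-Garunk\v{s}tis-Ka\v{c}enas \cite{Drun} and jointly by Mishou \cite{Mishou}, and to run it simultaneously for $\Gam_1,\dots,\Gam_r$. Taking logarithms,
\begin{align*}
\log Z_{\Gam_j}(s)=-\sum_{\gam\in\Prim(\Gam_j)}\sum_{n\geq0}\sum_{k\geq1}\frac{1}{k}\,N(\gam)^{-k(s+n)},
\end{align*}
so the leading part of each component is the Dirichlet series $\sum_{\gam\in\Prim(\Gam_j)}N(\gam)^{-s}$, whose frequencies $\log N(\gam)$ are, for a subgroup of $\sz$, determined by the integer trace through $N(\gam)=\big(\tfrac{\tr\gam+\sqrt{(\tr\gam)^2-4}}{2}\big)^2$, an injective function of $\tr\gam$ on $(2,\infty)$. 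I would reduce the theorem to two statements: (a) a joint limit theorem giving the limiting distribution, as $T\tinf$, of the random element $\tau\mapsto(Z_{\Gam_1}(\cdot+i\tau),\dots,Z_{\Gam_r}(\cdot+i\tau))$ in the product of the spaces of analytic functions on strips $D_j$ with $K_j\subset D_j\subset\{\frac{5}{6}<\Re s<1\}$; and (b) a joint support theorem asserting that the support of this limiting element contains every admissible tuple $(f_1,\dots,f_r)$. Granting (a) and (b), the set in the statement is, up to a small perturbation, an open neighbourhood of a point of the support, so it has positive limiting measure and the $\liminf$ is positive.

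For (a), I would truncate each $\log Z_{\Gam_j}$ to the classes with $N(\gam)\leq Y$ and bound the tail in mean square over $\tau\in[0,T]$ using Landau's formula for square integrals of Dirichlet series (Sections 233--236 in \cite{Lan}). The decisive input here is the multiplicity bound of Lemma \ref{lemclass} for $m_{\Gam}(n)$: being sharper than what the error term in \eqref{pgt} alone yields, it keeps this second-moment estimate valid throughout $\{\frac{5}{6}<\Re s<1\}$, and this is exactly what pins down the region in the statement (and removes the hypothesis $\alpha\leq\frac{2}{3}$). The finite truncations are then treated by the Kronecker-Weyl equidistribution theorem, producing convergence in distribution to the random element obtained by replacing each $N(\gam)^{-i\tau}$ by an independent random variable $\omega(\gam)$ uniform on $\{|z|=1\}$, indexed by $\bigcup_{j}\Prim(\Gam_j)$; a single family $\{\omega(\gam)\}$ drives all $r$ components, and a class belonging to several groups is assigned the same $\omega(\gam)$.

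The heart of the proof is the joint support theorem (b), and this is where condition \eqref{condition} enters. Because shared classes make the limiting components $\log\widetilde Z_{\Gam_j}$ correlated, one cannot simply apply the single-group result (Theorem \ref{thm1}) coordinatewise. By \eqref{condition}, however, each $\Gam_j$ possesses classes with traces in $\hat{T}_{j}$, and since the trace determines the norm injectively, the variables $\omega(\gam)$ attached to such classes do not occur at all in $\log\widetilde Z_{\Gam_i}$ for $i<j$ (each of those series is built only from $\{\omega(\delta):\delta\in\Prim(\Gam_i)\}$, and $\tr\gam\notin\Tr{\Gam_i}$ forces $\gam\notin\Prim(\Gam_i)$). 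These private variables make the $r$ components genuinely independent after conditioning on the shared classes, and I would combine this independence with a Pechersky-type rearrangement theorem (denseness of conditionally convergent Hilbert-space series) — the same device that yields single-group universality for each coordinate — to prove that the map $\{\omega(\gam)\}\mapsto(\log\widetilde Z_{\Gam_1},\dots,\log\widetilde Z_{\Gam_r})$ has dense range among tuples of non-vanishing functions. Hence any $(f_1,\dots,f_r)$ lies in the support.

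The main obstacle is precisely this joint decoupling in (b): I must show that the independence furnished by \eqref{condition} is strong enough that the full frequency set of each $\Gam_j$ can approximate an \emph{arbitrary} analytic $f_j$ on the interior of $K_j$ simultaneously, without the shared classes forcing a relation among the coordinates — rather than merely perturbing one coordinate. Arranging the rearrangement argument so that it respects this joint structure is the delicate point; once it is in place, the mean-square truncation of (a) and the Kronecker-Weyl step are routine, given the multiplicity bound of Lemma \ref{lemclass}.
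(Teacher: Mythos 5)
Your overall architecture (truncation plus a mean-square tail estimate powered by the multiplicity bound, Kronecker--Weyl, and a rearrangement/denseness step driven by the private frequencies) runs parallel to the paper's, but two points you leave open are exactly where the real work lies, and as written the argument does not close. The first is the random model in (a): you cannot assign independent uniform variables $\omega(\gam)$ to all $\gam\in\bigcup_j\Prim(\Gam_j)$. Distinct primitive classes with the same trace $n$ have the same norm $N(\gam)=\epsilon(n)^2$ (there are $m_{\Gam}(n)$, i.e.\ class-number many, of them), so $N(\gam)^{-i\tau}$ moves them in lockstep and no limit distribution can render them independent; moreover distinct traces can be multiplicatively dependent, e.g.\ $\epsilon(7)=\epsilon(3)^2$ and $\epsilon(18)=\epsilon(3)^3$. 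The correct model has one phase per trace $n$ ranging over a $\bQ$-linearly independent set of frequencies; the paper isolates such a set $\cT$ (Lemma \ref{lemlin}) and shows the complementary series converges absolutely for $\Re{s}>3/4$ (Lemma \ref{lemsq}(1)) so that it can be absorbed into the error. Your Kronecker--Weyl step must be formulated over $\cT$, not over classes.

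The second and more serious gap is the support theorem (b), which you yourself flag as the main obstacle but do not resolve. Condition \eqref{condition} guarantees only that $\hat{T}_{j}$ is nonempty, i.e.\ that \emph{one} private trace exists; to approximate an arbitrary $f_j$ by rearranging the phases attached to $\bcT_{j}=\cT\cap\hat{T}_{j}$ you need this private frequency set to satisfy the packing condition \eqref{packing}, that is, to contain $\gg e^{(1-\eta)x}$ elements in windows of width $x^{-2}$. This is precisely the content of the paper's Lemma \ref{partial}, and its proof is where the hypothesis that the $\Gam_j$ are \emph{congruence} subgroups enters: a single element of $\hat{\Gam}_{j}$ forces a whole coset of $\bar{\Gam}(N)$, hence a whole conjugacy class of $\Gam_j/\bar{\Gam}(N)$, to have traces in $\hat{T}_{j}$, and the Chebotarev-type prime geodesic theorem then supplies the required abundance of private classes. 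Without this input, ``private variables exist'' does not upgrade to ``private variables suffice to hit any target,'' and the decoupling you describe cannot be completed. For comparison, the paper organizes the decoupling recursively rather than probabilistically: at stage $j$ it applies the approximation proposition to $\log{f_{j}(s)}+L_{\Gam_{j}}(s;[3,X_{1}))-L_{\Gam_{j}}(s;\ccT_{j}\cap\cU_{j-1};\{\btheta_{n}^{(j-1)}\})$, absorbing the already-fixed shared phases into the target before rearranging over $\bcT_{j}$ — this is the concrete form of your ``conditioning on the shared classes,'' and it is the step your proposal would still need to carry out.
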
 

It is easy to see that, for an integer $N\geq2$, 
$\Tr{\bar{\Gam}(N)}=\{n\in \bZ \divset n\geq3, n\equiv \pm 2\bmod{N^{2}}\}$ 
and $\Tr{\sz}=\{n\in \bZ \divset n\geq3\}$.
Then $\Gam_{1}=\bar{\Gam}(N_{1})$, $\dots$, $\Gam_{r}=\bar{\Gam}(N_{r})$ 
and $\Gam_{r+1}=\sz$ satisfies the condition \eqref{condition} in Theorem \ref{thm2}. 
This means that Theorem \ref{thm2} is an improvement of Theorem \ref{Mi}. 
Checking whether the condition \eqref{condition} holds 
for given congruence subgroups $\Gam_{1},\dots,\Gam_{r}$ is not difficult 
since the sets $\Tr{\Gam_{1}},\dots,\Tr{\Gam_{r}}$ of traces are described by arithmetic progressions. 
Remark that whether the condition \eqref{condition} holds sometimes depends 
on the numbering of $\Gam_{1},\dots,\Gam_{r}$. 
For example, when $\Gam_{1}=\bar{\Gam}(3)$ and $\Gam_{2}=\sz$, 
it holds $\hat{T}_{1}=\Tr{\Gam_{1}}=\{n\geq 3, n\equiv \pm 2\bmod{9}\}$ 
and $\hat{T}_{2}=\Tr{\Gam_{2}}\bsla \Tr{\Gam_{1}}=\{n\geq 3, n\not\equiv \pm 2\bmod{9}\}$, 
and then the condition \eqref{condition} holds. 
However, when $\Gam_{1}=\sz$ and $\Gam_{2}=\bar{\Gam}(3)$, it holds 
$\hat{T}_{1}=\{n\geq 3\}$ and $\hat{T}_{2}=\emptyset$.
Thus, we should be careful about the numbering of $\Gam_{1},\dots,\Gam_{r}$
when checking whether the joint universality holds by the condition \eqref{condition}.
We also remark that there are pairs of subgroups of $\sz$ 
which have the same trace set \cite{Schmutz,Lake}. 
For example, $\Gam_{1}=\bar{\Gam}_{1}(p^{2})
=\left\{ \gam\equiv \pm\begin{pmatrix}1 & * \\ & 1 \end{pmatrix} \bmod{p^{2}}\right\}$ 
and $\Gam_{2}=\bar{\Gam}(p)$ satisfies 
$\Tr{\Gam_{1}}=\Tr{\Gam_{2}}=\{n\geq 3,n\equiv \pm 2\bmod{p^{2}}\}$. 
In such cases, the condition \eqref{condition} does not hold, 
and we should use a different approach to checking joint universality.

\section{Generalized Dirichlet series} 

Drungilas-Garunk\v{s}tis-Ka\v{c}enas \cite{Drun} proposed several propositions and lemmas
to approximate analytic functions by generalized Dirichlet series. 
In this section, we have stated some of them with minor modifications.

Let $\Lambda=\{\lambda\}$ be a monotone increasing sequence of positive real numbers 
tending to infinity, $\{a_{\lambda}\}_{\lambda\in\Lambda}\subset\bC$ and define
$$N(x):=\sum_{\lambda\in\Lambda, \lambda<x}|a_{\lambda}|.$$
We call that the series 
\begin{align}\label{pack}
\sum_{\lambda\in \Lambda}\frac{a_{\lambda}}{e^{\lambda s}}
\end{align} 
satisfies the {\it packing condition} 
if
\begin{align}\label{packing}
\left|N\left(x\pm\frac{c}{x^{2}}\right)-N(x)\right|\gg_{c,\eta} e^{(1-\eta)x}, \as x\tinf 
\end{align}
holds for any $c>0$ and $\eta>0$. 
The following proposition was given in \cite{Drun} 
to approximate analytic functions by the Dirichlet series associated with 
$\Lambda$ and $\{a_{\lambda}\}$.

\begin{prop} \label{appr} (Proposition 2.3 in \cite{Drun} and Proposition 4 in \cite{Mishou}) 
Let $\Lambda$ and $\{a_{\lambda}\}$ be as above.
Suppose that the Dirichlet series \eqref{pack} satisfies the packing condition \eqref{packing}. 
Let $\frac{1}{2}<\sigma_{1}<\sigma_{2}<1$, $K$ a compact subset of $\{\sigma_{1}<\Re{s}<\sigma_{2}\}$ 
with connected complement and 
$g(s)$ a non-vanishing function which continuous on $K$ and is analytic in the interior of $K$.
Then, for any $Q>0$, there exist a constant 
$Y_{0}>0$ depending on $\sigma_{1},\sigma_{2},K,g,\mu$ 
and a sequence $\{\theta_{\lambda}\}_{\lambda\in \Lambda}\subset [0,1)$ 
satisfying 
\begin{align*}
\max_{s\in K}\left|
g(s)-\sum_{\lambda\in \Lambda, Q<e^{\lambda}\leq Y}\frac{a_{\lambda}e(\theta_{\lambda})}{e^{\lambda s}}
\right| \ll 
\sum_{\lambda\in \Lambda, Q<e^{\lambda}\leq Y}\frac{|a_{\lambda}|^{2}}{e^{2\lambda \sigma_{1}}},
\end{align*}
for any $Y>Y_{0}$,  
where $e(x):=e^{2\pi i x}$ and the implied constant depends only on $\sigma_{1},\sigma_{2},K$ and $\Lambda$. 
\end{prop}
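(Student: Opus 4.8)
The plan is to recast the problem in a Hilbert space of analytic functions and reduce it to a quantitative rotation--rearrangement statement, with the packing condition \eqref{packing} supplying the divergence that makes the free phases $\theta_{\lambda}$ powerful enough to steer the partial sums toward $g$. Concretely, I would first fix a bounded open set $G$ with $K\subset G$, $\overline{G}\subset\{\sigma_{1}<\Re{s}<\sigma_{2}\}$ and connected complement, and work in the Bergman space $H=A^{2}(G)$ of square-integrable analytic functions on $G$. Since $K$ is compact in $G$, the $H$-norm controls the sup-norm on $K$ by interior mean-value estimates, so it suffices to produce the approximation in $H$. As $g$ is continuous on $K$, analytic in its interior, and $K$ has connected complement, Mergelyan's theorem lets me replace $g$ by a polynomial (hence an element of $H$) up to an arbitrarily small uniform error on $K$; thus I may assume the target lies in $H$. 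The building blocks are $w_{\lambda}:=a_{\lambda}e^{-\lambda s}\in H$, and a direct estimate gives $\|w_{\lambda}\|_{H}^{2}\ll|a_{\lambda}|^{2}e^{-2\lambda\sigma_{1}}$, so that the right-hand side of the asserted inequality is, up to the implied constant, exactly the energy $\sum_{Q<e^{\lambda}\le Y}\|w_{\lambda}\|_{H}^{2}$ of the available blocks.

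The heart of the argument is a clustering step driven by \eqref{packing}. I would partition the exponents in $(\log Q,\infty)$ into consecutive blocks $I_{j}=[x_{j},x_{j}+cx_{j}^{-2})$ on the packing scale. Over each block the factors $e^{-\lambda s}$ differ from the representative $e^{-x_{j}s}$ by $O(x_{j}^{-2})$ uniformly on $\overline{G}$, while \eqref{packing} forces the block mass $\sum_{\lambda\in I_{j}}|a_{\lambda}|\gg e^{(1-\eta)x_{j}}$. Consequently, by selecting the phases $e(\theta_{\lambda})$ inside a block I can make the block sum equal $b_{j}e^{-x_{j}s}$ for an essentially arbitrary $b_{j}$ with $|b_{j}|\le\sum_{\lambda\in I_{j}}|a_{\lambda}|$, committing only an error at the level of the block's own $L^{2}$-defect $\ll\sum_{\lambda\in I_{j}}\|w_{\lambda}\|_{H}^{2}$. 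Since $\sigma_{1}<1$, the admissible amplitudes satisfy $|b_{j}|\,\|e^{-x_{j}s}\|_{H}\gg e^{(1-\eta-\sigma_{1})x_{j}}\to\infty$, so the size constraint on $b_{j}$ never obstructs approximation.

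It then remains to approximate $g$ in $H$ by $\sum_{j}b_{j}e^{-x_{j}s}$ with freely choosable $b_{j}$. Here I would invoke the complex rotation--rearrangement theorem in Hilbert space: once the blocks are square-summable (the finiteness of the energy, guaranteed for $\sigma_{1}>1/2$) and satisfy the directional divergence $\sum_{\lambda\in(\log Q,\infty)}|\langle w_{\lambda},e\rangle_{H}|=\infty$ for every nonzero $e\in H$, the phases can be chosen so that the partial sums converge to $g$, the divergence absorbing $\|g\|_{H}^{2}$ in the least-squares recursion and leaving the residual after the terms $e^{\lambda}\le Y$ bounded by the energy consumed, namely $\sum_{Q<e^{\lambda}\le Y}\|w_{\lambda}\|_{H}^{2}$. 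The divergence hypothesis is exactly where packing re-enters: grouping by blocks, $\sum_{\lambda\in I_{j}}|a_{\lambda}|\,|\langle e^{-x_{j}s},e\rangle_{H}|\gg e^{(1-\eta)x_{j}}|\langle e^{-x_{j}s},e\rangle_{H}|$, and since no nonzero $e$ can be orthogonal to all $e^{-x_{j}s}$ (a M\"untz--Sz\'asz-type completeness in $H$, valid because the densely packed centers satisfy $\sum_{j}x_{j}^{-1}=\infty$), the series over $j$ diverges. Running the associated greedy least-squares selection of phases and converting the final $H$-bound back to the sup-norm on $K$ yields the claim, with $Y_{0}$ the threshold past which the completeness-driven approximation has taken effect.

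The main obstacle is establishing the directional-divergence hypothesis quantitatively, that is, lower bounds for $|\langle e^{-x_{j}s},e\rangle_{H}|$ that survive when multiplied by the packing mass $e^{(1-\eta)x_{j}}$. The factor $e^{-x_{j}s}=e^{-x_{j}\sigma}e^{-ix_{j}t}$ oscillates in $t$, so the integral defining $\langle e^{-x_{j}s},e\rangle_{H}$ is oscillatory and could a priori decay faster than the naive $e^{-x_{j}\sigma_{1}}$ bound through cancellation; ruling this out, via the Laplace-transform asymptotics governed by the left edge $\Re{s}=\sigma_{1}$ together with the completeness of the exponentials, is the delicate point. It is precisely the combination of this analytic lower bound with the packing mass that both certifies the hypothesis of the rearrangement theorem and pins the error at the energy level $\sum_{Q<e^{\lambda}\le Y}|a_{\lambda}|^{2}e^{-2\lambda\sigma_{1}}$.
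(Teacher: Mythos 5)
The paper itself offers no proof of this proposition: it is imported verbatim from Proposition 2.3 of \cite{Drun} (see also Proposition 4 of \cite{Mishou}), and your outline follows essentially the same route as those sources --- pass to a Bergman-type Hilbert space over a region containing $K$, reduce $g$ to a polynomial by Mergelyan, and feed the blocks $w_{\lambda}=a_{\lambda}e^{-\lambda s}$ into a Pechersky-type rearrangement theorem together with a greedy phase selection to get the quantitative error $\ll\sum_{Q<e^{\lambda}\le Y}\|w_{\lambda}\|^{2}$. So the architecture is right.

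The genuine gap is exactly at the step you yourself call ``the delicate point,'' and it cannot be waved away, because it is where the packing condition actually does its work. The inference ``no nonzero $e$ is orthogonal to all $e^{-x_{j}s}$, hence $\sum_{j}e^{(1-\eta)x_{j}}|\langle e^{-x_{j}s},e\rangle|$ diverges'' is a non sequitur: M\"untz--Sz\'asz completeness yields a single nonzero term, not divergence, and the growing weight $e^{(1-\eta)x_{j}}$ is useless without a lower bound on $|\langle e^{-xs},e\rangle|$ valid for a sufficiently dense set of large $x$. What the cited proof actually establishes (following Voronin's scheme) is that $x\mapsto\langle e^{-xs},e\rangle$ extends to an entire function of exponential type whose indicator forces $|\langle e^{-xs},e\rangle|\ge e^{-(\sigma_{1}+\epsilon)x}$ on a set of $x$ of positive density; only then does the packing mass $\gg e^{(1-\eta)x}$ in the windows $[x,x+cx^{-2}]$, combined with $\sigma_{1}<1$, give terms of size $e^{(1-\sigma_{1}-\eta-\epsilon)x}\to\infty$ and hence the directional divergence needed to run the rearrangement. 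Until that Bernstein-type lower bound is proved, the hypothesis of the rearrangement theorem is unverified and no choice of $\{\theta_{\lambda}\}$ is produced. A secondary, repairable defect: your clustering step claims the block coefficients $b_{j}=\sum_{\lambda\in I_{j}}a_{\lambda}e(\theta_{\lambda})$ fill the disc of radius $\sum_{\lambda\in I_{j}}|a_{\lambda}|$, but in general they fill only an annulus (a single dominant $|a_{\lambda}|$ obstructs small $|b_{j}|$); the proposition is stated for arbitrary $\{a_{\lambda}\}$ satisfying \eqref{packing}, so this needs either an extra hypothesis or, better, the clustering should be dropped for the steering step and the rearrangement applied directly to the $w_{\lambda}$, as in \cite{Drun}.
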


Next, we state the following proposition, which is a minor modification 
of Proposition 2.8 and Lemma 2.9 in \cite{Drun} (see also Proposition 5 in \cite{Mishou}).
\begin{prop}\label{mu}  
Let $\alpha>0$ and $\Lambda\subset \bR_{>0}$ be the sequence 
monotone increasing, tending to infinity and linearly independent over $\bQ$. 
Suppose that the series 
$$\sum_{\lambda\in\Lambda}\frac{|a_{\lambda}|^{2}}{e^{2\lambda \sigma}}$$ 
converges for $\sigma>\alpha$. 
Then the following (1) and (2) hold.

\noi (1) For a given series $\{\theta_{\lambda}\}_{\lambda\in\Lambda}\subset [0,1)$, 
a finite subset $\Lambda_{1}$ of $\Lambda$
and $0<\delta<1/2$, let 
$$
S_{T}=S_{T}(\delta,\Lambda_{1}):=\left\{
\tau\in [0,T] \Bigdivset \left\|-\frac{\tau\lambda}{2\pi}-\theta_{\lambda} \right\|<\delta \quad 
\text{for any $\lambda\in\Lambda_{1}$}
\right\},
$$ 
where $\|x\|$ is the distance from the nearest integer to $x$. 
Then we have 
$$\lim_{T\tinf}\frac{\mu(S_{T})}{T}=(2\delta)^{\#\Lambda_{1}}.$$

\vapt

\noi (2) Let $Q>0$ be a large number, $\Lambda_{2}=\Lambda_{2}(Q)$ a finite subset of $\Lambda$ 
with $\Lambda_{1}\cap \Lambda_{2}=\emptyset$, 
$\disp \min_{\lambda\in\Lambda_{2}}e^{\lambda} \geq Q$   
and $K$ a compact subset of $\{\alpha<\Re{s}<1\}$. 
Denote by $S'_{T}=S'_{T}(K,\Lambda_{2},Q)$ the set of $\tau\in S_{T}$ satisfying 
\begin{align*}
\max_{s\in K}\left|
\sum_{\lambda\in \Lambda_{2}(Q)}\frac{a_{\lambda}}{e^{\lambda (s+i\tau)}}
\right| < 
\left(\sum_{\lambda\in \Lambda_{2}(Q)}\frac{|a_{\lambda}|^{2}}{e^{2\lambda \sigma_{1}}}\right)^{1/4},
\end{align*} 
where $\disp \alpha<\sigma_{1}<\max_{s\in K}\Re{s}$. 
Then, for any $0<\beta<1$, there exists $Q>0$ such that 
\begin{align}\label{S'}
\lim_{T\tinf}\frac{\mu(S'_{T})}{T}>(1-\beta)\lim_{T\tinf}\frac{\mu(S_{T})}{T}.
\end{align}
\end{prop}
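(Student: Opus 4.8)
The plan is to deduce both parts from the Kronecker--Weyl equidistribution theorem for a linear flow on a torus, the essential input being that $\Lambda$ is linearly independent over $\bQ$.

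For part (1), I would regard $S_T$ as a box constraint on the flow $\tau\mapsto\left(\tfrac{\tau\lambda}{2\pi}\bmod 1\right)_{\lambda\in\Lambda_1}$ on the torus $(\bR/\bZ)^{\#\Lambda_1}$. Since $\Lambda_1\subset\Lambda$ is linearly independent over $\bQ$, so are the frequencies $\{\lambda/2\pi\}_{\lambda\in\Lambda_1}$, and Kronecker--Weyl gives that this flow equidistributes with respect to the Haar measure. The condition $\left\|-\tfrac{\tau\lambda}{2\pi}-\theta_\lambda\right\|<\delta$ cuts out in each coordinate an arc of length $2\delta$ (the shift by $\theta_\lambda$ and the reflection do not change the length), so $S_T$ is the pull-back of a box of Haar measure $(2\delta)^{\#\Lambda_1}$. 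Applying equidistribution to the indicator of this box --- whose boundary is Haar-null, so that it is Riemann integrable and can be squeezed between continuous functions --- yields $\mu(S_T)/T\to(2\delta)^{\#\Lambda_1}$.

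For part (2), put $P(s,\tau):=\sum_{\lambda\in\Lambda_2}a_\lambda e^{-\lambda(s+i\tau)}$ and $D_2:=\sum_{\lambda\in\Lambda_2}|a_\lambda|^2 e^{-2\lambda\sigma_1}$, so the complement $B_T:=S_T\setminus S'_T$ consists of the $\tau\in S_T$ with $\max_{s\in K}|P(s,\tau)|\ge D_2^{1/4}$, and it suffices to bound $\mu(B_T)$. First I would compute the mean square of $P$ over the \emph{constrained} set $S_T$: expanding
\begin{align*}
|P(s,\tau)|^2=\sum_{\lambda,\lambda'\in\Lambda_2}a_\lambda\overline{a_{\lambda'}}\,e^{-\lambda s-\lambda'\overline{s}}\,e^{-i(\lambda-\lambda')\tau}
\end{align*}
and integrating against the indicator of $S_T$, I view each summand on the larger torus indexed by $\Lambda_1\cup\Lambda_2$, which is still $\bQ$-independent. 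The $\Lambda_1$-box and the character $e^{-i(\lambda-\lambda')\tau}$ depend on disjoint blocks of coordinates, so the Haar integral factorizes and the $\Lambda_2$-character integrates to $0$ unless $\lambda=\lambda'$; equidistribution therefore gives
\begin{align*}
\lim_{T\tinf}\frac{1}{T}\int_{S_T}|P(s,\tau)|^2\,d\tau=(2\delta)^{\#\Lambda_1}\sum_{\lambda\in\Lambda_2}\frac{|a_\lambda|^2}{e^{2\lambda\Re s}}.
\end{align*}
Next I would pass from the supremum over $K$ to an $L^2$-average: since $P(\cdot,\tau)$ is entire, $|P|^2$ is subharmonic, and the sub-mean-value inequality over discs of a fixed radius $r$ gives $\max_{s\in K}|P(s,\tau)|^2\ll_r\int_{K'}|P(s,\tau)|^2\,dA(s)$ uniformly in $\tau$, where $K':=K+\overline{D(0,r)}$ is taken inside $\{\sigma_1<\Re s<1\}$ (possible because $\sigma_1$ lies to the left of $K$). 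Combining this with the mean-square identity through Tonelli and bounded convergence, and using $\Re s\ge\sigma_1$ on $K'$, I obtain $\limsup_{T\tinf}\tfrac{1}{T}\int_{S_T}\max_{s\in K}|P|^2\,d\tau\ll(2\delta)^{\#\Lambda_1}D_2$. Chebyshev's inequality gives $\mu(B_T)\le D_2^{-1/2}\int_{S_T}\max_{s\in K}|P|^2\,d\tau$, whence $\limsup_{T\tinf}\mu(B_T)/T\ll(2\delta)^{\#\Lambda_1}D_2^{1/2}$. Finally, $D_2=D_2(Q)$ is a tail of the series $\sum_\lambda|a_\lambda|^2 e^{-2\lambda\sigma_1}$, which converges for $\sigma_1>\alpha$, so $D_2\to0$ as $Q\tinf$; choosing $Q$ so large that the implied constant times $D_2^{1/2}$ is $<\beta$ yields $\limsup_{T\tinf}\mu(B_T)/T<\beta\lim_{T\tinf}\mu(S_T)/T$, and therefore $\lim_{T\tinf}\mu(S'_T)/T>(1-\beta)\lim_{T\tinf}\mu(S_T)/T$, which is \eqref{S'}.

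The hard part will be the mean-square evaluation over $S_T$ rather than over all of $[0,T]$: one must check that conditioning on the $\Lambda_1$-box does not spoil the cancellation of the off-diagonal $\Lambda_2$-oscillations. This is exactly where the full $\bQ$-linear independence of $\Lambda$ enters --- it forces joint equidistribution on the combined torus and the factorization that decouples the box constraint from the $\Lambda_2$ frequencies. A secondary technical point is to arrange $K'$ and $\sigma_1$ so that the mean square over $K'$ is dominated by $D_2$ with a constant independent of $Q$, which uses that $\sigma_1$ may be taken below $\min_{s\in K}\Re s$.
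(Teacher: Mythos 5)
Your proposal is correct and follows essentially the same route as the paper: part (1) is the Kronecker--Weyl equidistribution argument that the paper delegates to \cite{Drun} and \cite{Mishou}, and part (2) reproduces the paper's own computation --- a mean square of the $\Lambda_{2}$-sum over $S_{T}$ via joint equidistribution on the combined torus (Lemma \ref{ww}), with the $\Lambda_{1}$-box decoupling from the off-diagonal $\Lambda_{2}$-characters, followed by the $L^{2}$-to-sup step (Lemma \ref{lemfunc}) and a Chebyshev bound exploiting that the tail $D_{2}(Q)\to 0$. Your closing observation that $\sigma_{1}$ must be taken below $\min_{s\in K}\Re{s}$ is the correct reading of the hypothesis (the stated condition $\sigma_{1}<\max_{s\in K}\Re{s}$ and the paper's choice $U\subset\{\alpha<\Re{s}<\sigma_{1}\}$ are evidently typos, since the bound $e^{-2\lambda\Re{s}}\leq e^{-2\lambda\sigma_{1}}$ requires $\Re{s}\geq\sigma_{1}$ on $U$), so no gap remains.
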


We use the following lemmas in the proof of the proposition above.
\begin{lem}\label{lemfunc} (Lemma 2.5 in \cite{Gonek} and Lemma 2.5 in \cite{Drun})
Let $K$ be a compact subset of a bounded rectangle $U$,  
$\disp d:=\min_{z\in \partial{U}}\min_{s\in K}|s-z|$ 
and $f(s)$ an analytic function in $U$. 
Then, for a given $\epsilon>0$, we have 
\begin{align*} 
\iint_{U}|f(s)|^{2}ds\leq \epsilon 
\quad \Rightarrow \quad 
\max_{s\in K}|f(s)|\leq \frac{1}{d}\sqrt{\frac{\epsilon}{\pi}}. 
\end{align*} 
\end{lem}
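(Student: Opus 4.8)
The plan is to convert the hypothesized $L^2$-bound on $U$ into a pointwise bound on $K$ via the area mean value property of analytic functions. Fix an arbitrary point $s_0\in K$. Since $d$ is by definition the minimal distance from $K$ to the boundary $\partial U$, the open disc $D(s_0,d):=\{s\in\bC \divset |s-s_0|<d\}$ is entirely contained in $U$. This containment is the one place where the precise definition of $d$ enters, and it is what makes the whole argument run: every point of $K$ is the center of a disc of radius $d$ that does not reach $\partial U$.

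Next I would invoke the mean value property on this disc. Starting from the circular identity $f(s_0)=\frac{1}{2\pi}\int_0^{2\pi}f(s_0+re^{i\theta})\,d\theta$, valid for every $0<r<d$, and integrating against the weight $r\,dr$ over $r\in(0,d)$ while using that the area of $D(s_0,d)$ equals $\pi d^2$, one obtains the area version of the mean value property,
$$
f(s_0)=\frac{1}{\pi d^2}\iint_{D(s_0,d)}f(s)\,ds.
$$
Applying the Cauchy--Schwarz inequality on $D(s_0,d)$ then gives
$$
|f(s_0)|\leq \frac{1}{\pi d^2}\left(\iint_{D(s_0,d)}1\,ds\right)^{1/2}\left(\iint_{D(s_0,d)}|f(s)|^{2}\,ds\right)^{1/2}
=\frac{1}{\pi d^2}\,(\pi d^2)^{1/2}\left(\iint_{D(s_0,d)}|f(s)|^{2}\,ds\right)^{1/2}.
$$
Because $D(s_0,d)\subseteq U$, the remaining integral is bounded by $\iint_{U}|f(s)|^{2}\,ds\leq \epsilon$, and collecting the constants yields $|f(s_0)|\leq \frac{1}{d}\sqrt{\epsilon/\pi}$. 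Since $s_0\in K$ was arbitrary, taking the maximum over the compact set $K$ completes the proof.

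The computation is entirely routine, so I do not expect a substantial obstacle. The only steps requiring a moment's care are the geometric observation that $D(s_0,d)\subseteq U$ for every $s_0\in K$ --- a direct consequence of $d=\min_{z\in\partial U}\min_{s\in K}|s-z|$ --- and the bookkeeping of constants in the Cauchy--Schwarz step, where the area factor $(\pi d^2)^{1/2}$ and the prefactor $1/(\pi d^2)$ combine to give precisely $1/(d\sqrt{\pi})$, reproducing the stated bound exactly.
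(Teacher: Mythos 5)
Your proof is correct: the containment $D(s_0,d)\subseteq U$ for each $s_0\in K$, the area mean value property, and the Cauchy--Schwarz step with the constants $(\pi d^2)^{-1}\cdot(\pi d^2)^{1/2}=1/(d\sqrt{\pi})$ all check out and reproduce the stated bound exactly. The paper itself only cites this lemma (to Gonek and to Drungilas--Garunk\v{s}tis--Ka\v{c}enas) without proof, and your argument is precisely the standard one used in those sources.
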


\begin{lem}\label{ww} (\S 8 of Appendix in \cite{Kara} and Lemma 2.10 in \cite{Drun})
Let $w:\bR\to \bR^{N}$ be a curve and suppose that $\{w(t)\divset t\in\bR\}\subset \bR^{N}$ is 
uniformly distributed modulo $1$ in $\bR$. 
Denote by $D$ a closed and Jordan measurable subset of the unit cube in $\bR^{N}$ 
and by $\Omega$ a family of complex-valued continuous functions on $D$. 
If $\Omega$ is uniformly bounded and equi-continuous, then
$$
\lim_{T\tinf}\frac{1}{T}\int_{0}^{T}f(\{w(t)\})1_{D}(t)dt
=\int_{D}f(x_{1},\dots,x_{N})dx_{1}\cdots x_{N}
$$
uniformly with respect to $f\in \Omega$, 
where $\{w(t)\}:=\left(w_{1}(t)-[w_{1}(t)],\dots,w_{N}(t)-[w_{N}(t)]\right)$
for $w(t)=\left(w_{1}(t),\dots,w_{N}(t)\right)$ and  
$1_{D}(t)=1$ if $w(t)\in D$ modulo $1$ and $1_{D}(t)=0$ otherwise.   
\end{lem}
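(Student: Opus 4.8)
The plan is to deduce the statement from the defining property of uniform distribution modulo $1$ together with the compactness of $\Omega$ furnished by the Arzel\`a--Ascoli theorem. Throughout I would regard the indicator $1_{D}$ as a function on the unit cube $[0,1]^{N}$, so that $1_{D}(t)=1_{D}(\{w(t)\})$, and for each $f\in\Omega$ I would extend $f$ by the Tietze extension theorem to a continuous function $\tilde f$ on the whole cube with $\sup_{[0,1]^{N}}|\tilde f|\le\sup_{D}|f|$ (projecting the values into the disc of radius $\sup_{D}|f|$ keeps continuity and fixes $\tilde f|_{D}=f$). Then $f(\{w(t)\})1_{D}(t)=\tilde f(\{w(t)\})\,1_{D}(\{w(t)\})$, which is the object I need to handle.

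First I would record the case of a single continuous integrand. By the definition of uniform distribution modulo $1$ (Weyl's theorem, as in \cite{Kara}), for every Riemann-integrable $F$ on $[0,1]^{N}$ one has
\begin{align*}
\frac{1}{T}\int_{0}^{T}F(\{w(t)\})\,dt\longrightarrow \int_{[0,1]^{N}}F(x)\,dx \as T\tinf.
\end{align*}
Since $D$ is closed and Jordan measurable, its boundary is a Lebesgue null set, so for any $\delta>0$ there is a continuous $\chi\colon[0,1]^{N}\to[0,1]$ with $\int_{[0,1]^{N}}|\chi-1_{D}|\,dx<\delta$. Writing $\tilde f\,1_{D}=(\tilde f\chi)+\tilde f(1_{D}-\chi)$, I would apply the displayed limit to the continuous function $\tilde f\chi$ and to the Riemann-integrable function $|1_{D}-\chi|$, and use $\int_{[0,1]^{N}}\tilde f\,1_{D}\,dx=\int_{D}f\,dx$ together with $|\int_{[0,1]^{N}}\tilde f\chi\,dx-\int_{D}f\,dx|\le\sup_{D}|f|\int|\chi-1_{D}|<\delta\sup_{D}|f|$. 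This shows, for each fixed $f$, that the time average tends to $\int_{D}f\,dx$, with all approximation errors bounded by a fixed multiple of $\delta\cdot\sup_{D}|f|$; because $\Omega$ is uniformly bounded, these error bounds are uniform over $\Omega$.

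The remaining point is to make the rate of convergence uniform in $f$, since the limit above a priori holds at a function-dependent speed. Here I would invoke equicontinuity and uniform boundedness, which by Arzel\`a--Ascoli make $\Omega$ totally bounded in $C(D)$ under the supremum norm. Given $\delta>0$, I would fix a finite $\delta$-net $f_{1},\dots,f_{M}\in\Omega$; the previous step yields a single threshold $T_{0}=\max_{k}T_{k}$ beyond which every $f_{k}$ has its time average within $\delta$ of $\int_{D}f_{k}\,dx$. For an arbitrary $f\in\Omega$ I would choose $f_{k}$ with $\|f-f_{k}\|_{\infty}<\delta$ and bound the difference between the time averages (resp.\ the space integrals) of $f$ and $f_{k}$ by $\|f-f_{k}\|_{\infty}\cdot\frac{1}{T}\int_{0}^{T}1_{D}(t)\,dt\le\delta$ (resp.\ by $\|f-f_{k}\|_{\infty}\,\vol(D)\le\delta$). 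The triangle inequality then bounds the deviation of the time average of $f$ from $\int_{D}f\,dx$ by a fixed multiple of $\delta$ for all $T>T_{0}$ and all $f\in\Omega$ at once, which is the asserted uniform convergence.

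I expect the main obstacle to be the interaction between the discontinuity of $1_{D}$ and the required uniformity: the continuous approximation $\chi$ of $1_{D}$ must be chosen before and independently of $f$, and the resulting errors must be absorbed by the uniform bound on $\Omega$, so that the Jordan-measurability approximation of Step~2 and the Arzel\`a--Ascoli net of Step~3 can be combined into a single threshold $T_{0}$ valid for the entire family $\Omega$ simultaneously.
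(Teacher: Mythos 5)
Your argument is correct. Note, however, that the paper does not prove this lemma at all: it is quoted verbatim from \S 8 of the Appendix of Karatsuba--Voronin and from Lemma 2.10 of Drungilas--Garunk\v{s}tis--Ka\v{c}enas, and is used as a black box in the proof of Proposition \ref{mu}. Your write-up is therefore a self-contained substitute for the citation rather than an alternative to an argument in the text. The route you take is the standard one behind the cited result: reduce to the Weyl-type statement that time averages of Riemann-integrable functions along a uniformly distributed curve converge to the space integral, absorb the discontinuity of $1_{D}$ using the Jordan measurability of $D$ (sandwiching $1_{D}$, or $|1_{D}-\chi|$, between functions whose time averages are controlled), and then upgrade pointwise-in-$f$ convergence to uniform convergence over $\Omega$ by passing to a finite $\delta$-net supplied by Arzel\`a--Ascoli and comparing an arbitrary $f$ with its nearest net element, where the comparison of time averages costs only $\|f-f_{k}\|_{\infty}\cdot\frac{1}{T}\int_{0}^{T}1_{D}(t)\,dt\le\|f-f_{k}\|_{\infty}$. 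The two small technical points you flag are handled correctly: the continuous approximant $\chi$ is chosen once, independently of $f$, with its error absorbed by the uniform bound on $\Omega$; and the Tietze extension followed by radial retraction keeps $\sup|\tilde f|\le\sup_{D}|f|$, so all error terms are uniform over the family. I see no gap.
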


\vcpt

\noi{\bf Proof of Proposition \ref{mu}.} 
(1) See Lemma 2.9 in \cite{Drun} and Proposition 5 in \cite{Mishou}.

\noi(2) Let $U$ be a bounded rectangle with $K\subset U\subset\{\alpha<\Re{s}<\sigma_{1}(<1)\}$ 
and $\disp d:=\min_{z\in \partial{U}}\min_{s\in K}|s-z|$. 
We study the integral
\begin{align}\label{stsq}
\frac{1}{T}\int_{S_{T}}\int_{U}\left|
\sum_{\lambda\in \Lambda_{2}(Q)}\frac{a_{\lambda}}{e^{\lambda (s+i\tau)}}\
\right|^{2}dsd\tau 
=\int_{U} \frac{1}{T}\int_{S_{T}}\left|
\sum_{\lambda\in \Lambda_{2}(Q)}\frac{a_{\lambda}}{e^{\lambda (s+i\tau)}}\
\right|^{2}d\tau ds  
\end{align}
by using Lemma \ref{ww}. 

Let $N_{1}:=\#\Lambda_{1}$,$N_{2}:=\#\Lambda_{2}$ and $N:=N_{1}+N_{2}$.
Denote by $\Lambda_{1}=\{\lambda_{1},\dots,\lambda_{N_{1}}\}$, 
$\Lambda_{2}=\{\lambda_{N_{1}+1},\dots,\lambda_{N}\}$ 
and put 
$w(t):=\left(\frac{\lambda_{1}}{2\pi}t,\dots,\frac{\lambda_{N}}{2\pi}t \right).
$
For a series $\{\theta_{\lambda}\}_{\lambda\in\Lambda}\subset [0,1)$ 
and $0<\delta<1/2$, let 
\begin{align*}
R_{1}=&\left\{(y_{1},\dots,y_{N_{1}})\in [0,1]^{N_{1}} \bigdivset 
\left\| y_{j}-\theta_{\lambda_{j}}  \right\|<\delta \quad (1\leq j\leq N_{1})
 \right\},\\
R_{2}=&\left\{(y_{1},\dots,y_{N})\in [0,1]^{N} \bigdivset 
\begin{array}{ll}
\left\| y_{j}-\theta_{\lambda_{j}}  \right\|<\delta & (1\leq j\leq N_{1}), \\
\left\| y_{j}-1/2  \right\|<1/2 & (N_{1}+1\leq j\leq N )
\end{array}
 \right\},  
\end{align*}
It is clear that $\mu(R_{1})=\mu(R_{2})=(2\delta)^{N_{1}}$. 
According to Lemma \ref{ww}, we have
\begin{align*}
\lim_{T\tinf}\frac{1}{T}\int_{S_{T}}\left|
\sum_{\lambda\in  \Lambda_{2}(Q)}\frac{a_{\lambda}}{e^{\lambda (s+i\tau)}}\
\right|^{2}d\tau 
=&\lim_{T\tinf}\frac{1}{T}\int_{0}^{T}\left|
\sum_{\lambda\in  \Lambda_{2}(Q)}\frac{a_{\lambda}}{e^{\lambda (s+i\tau)}}
\right|^{2}1_{R_{2}}(\tau) d\tau \\
=&\int_{R_{2}}\left|
\sum_{N_{1}+1\leq j\leq N}\frac{a_{\lambda_{j}}}{e^{\lambda_{j}s}}e\left(\lambda_{j}y_{j}\right) 
\right|^{2} dy_{1}\cdots dy_{N}\\
=&\mu(R_{1})\int_{0}^{1} \cdots \int_{0}^{1}\left|
\sum_{N_{1}+1\leq j\leq N}\frac{a_{\lambda_{j}}}{e^{\lambda_{j}s}}e\left(\lambda_{j}y_{j}\right) 
\right|^{2} dy_{N_{1}+1}\cdots dy_{N}\\
\leq & \mu(R_{1})\sum_{\lambda\in  \Lambda_{2}(Q)}\frac{|a_{\lambda}|^{2}}{e^{2\lambda \sigma_{1}}}
\leq  \mu(R_{1})\sum_{\lambda\in \Lambda, e^{\lambda}\geq Q}\frac{|a_{\lambda}|^{2}}{e^{2\lambda \sigma_{1}}}. 
\end{align*}
Then the integral \eqref{stsq} is bounded by 
\begin{align}\label{ineq}
\frac{1}{T}\int_{S_{T}}\int_{U}\left|
\sum_{\lambda\in  \Lambda_{2}(Q)}\frac{a_{\lambda}}{e^{\lambda (s+i\tau)}}
\right|^{2}dsd\tau
\leq \mu(U)\frac{\mu(S_{T})}{T}\sum_{\lambda\in \Lambda, e^{\lambda}\geq Q}\frac{|a_{\lambda}|^{2}}{e^{2\lambda \sigma_{1}}}
+o(1),\as T\tinf. 
\end{align}
Since the sum $\sum_{e^{\lambda}\geq Q}(...)$  in the inequality above tends to zero as $Q\tinf$, 
we see that, for any $0<\beta<1$, there exists $Q>0$ such that 
\begin{align*}
\mu\left\{
\tau\in S_{T} \biggdivset \int_{U}\left|
\sum_{\lambda\in \Lambda_{2}(Q)}\frac{a_{\lambda}}{e^{\lambda (s+i\tau)}}
\right|^{2}ds<
d^{2}\pi\left(\sum_{\lambda\in \Lambda, e^{\lambda}\geq Q}\frac{|a_{\lambda}|^{2}}{e^{2\lambda \sigma_{1}}}\right)^{1/2}
\right\} >(1-\beta)\mu(S_{T}).
\end{align*}
Thus Lemma \ref{mu} follows immediately from Lemma \ref{lemfunc}.
\qed

\section{Explicit formula for the Selberg zeta function}

In this section, we study the logarithm $\log{Z_{\Gam}(s)}$ of the Selberg zeta function, whose branch is  
chosen such that $\log{Z_{\Gam}(s)}\to 0$ as $\Re{s}\tinf$. 
It is easy to see that  
$$
\log{Z_{\Gam}(s)}=-\sum_{\gam\in\Prim(\Gam),j\geq1 } 
\frac{1}{j(1-N(\gam)^{-j})}N(\gam)^{-js}  
$$
for $\Re{s}>1$. 
For $\frac{1}{2}<\Re{s}\leq 1$, it has the following expression as a sum over $\gam\in \Prim(\Gam)$. 
\begin{lem}\label{exp}
Let $\Gam$ be a discrete subgroup of $\sr$ with $\vol(\Gam\bsla H)<\infty$, 
$x>0$ and $s=\sigma+iT\in \bC$ with $\frac{1}{2}<\sigma <1$, $T\geq1$. 
Set 
$$
\psi_{\Gam,s}(x):=\sum_{\begin{subarray}{c}\gam\in\Prim(\Gam),j\geq1 \\ N(\gam)^j<x\end{subarray}} 
\frac{1}{j(1-N(\gam)^{-j})}
\left(1-\frac{N(\gam)^{j}}{x} \right)N(\gam)^{-js}. 
$$
Then we have
\begin{align*}
\log{Z_{\Gam}(s)}=
&-\psi_{\Gam,s}(x)+O_{\eta,\Gam}\left(T^{-2}x^{1-\sigma}+T^{1+\eta}x^{1/2-\sigma+\eta}\right)
\as T,x\tinf
\end{align*}
for any $\eta>0$.
\end{lem}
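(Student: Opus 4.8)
The plan is to realise $\psi_{\Gam,s}(x)$ as a contour integral of $\log Z_{\Gam}(s+w)$ against the first-order Riesz kernel $\frac{x^{w}}{w(w+1)}$, and then to shift the contour to the left across the pole at $w=0$. First I would record the Perron-type identity
$$
\frac{1}{2\pi i}\int_{(c)}\frac{y^{w}}{w(w+1)}\,dw=
\begin{cases}1-y^{-1}&(y\ge1),\\[2pt] 0&(0<y<1),\end{cases}
\qquad(c>0),
$$
where $(c)$ denotes the vertical line $\Re w=c$, apply it with $y=x/N(\gam)^{j}$, and interchange the integral with the sum over $\gam\in\Prim(\Gam)$, $j\ge1$. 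The interchange is legitimate on $(c)$ with $c>1-\sigma$, since there $\Re(s+w)>1$, the Dirichlet series for $-\log Z_{\Gam}(s+w)$ converges absolutely, and the kernel decays like $|w|^{-2}$. This yields the exact identity $\psi_{\Gam,s}(x)=-\frac{1}{2\pi i}\int_{(c)}\log Z_{\Gam}(s+w)\,\frac{x^{w}}{w(w+1)}\,dw$, and I would take $c=1-\sigma+1/\log x$ so that $x^{c}\asymp x^{1-\sigma}$.

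Next I would move the contour to the line $\Re w=\sigma_{0}:=\tfrac12-\sigma+\eta$ (with $\eta<\sigma-\tfrac12$, so that $-1<\sigma_{0}<0<c$), truncating at height $|\Im w|\le U$ with $U\asymp T^{2}$ and closing the rectangle with horizontal segments at $\Im w=\pm U$. The only pole strictly inside the rectangle is the simple pole at $w=0$, whose residue $\log Z_{\Gam}(s)$ is, after rearrangement, exactly the main term $-\log Z_{\Gam}(s)$ in the claim. Since $\log Z_{\Gam}$ is multivalued I must be careful about the zeros of $Z_{\Gam}$ in $\{\tfrac12<\Re\le1\}$: the simple zero at $u=1$ and the finitely many exceptional real zeros $u=\rho$, which in the $w$-variable become logarithmic branch points at $w=1-s$ and $w=\rho-s$, all sitting at height $\Im w=-T$. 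I would take the cuts to run horizontally to the left and deform into a keyhole around each; the jump of $2\pi i$ across each cut contributes $\ll\int_{\sigma_{0}}^{1-\sigma}\frac{x^{u}}{|(u-iT)(u+1-iT)|}\,du\ll T^{-2}x^{1-\sigma}$, which is exactly the first error term. The spectral zeros on $\Re u=\tfrac12$ stay to the left of $\Re w=\sigma_{0}$ and are never crossed.

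It then remains to estimate the three remaining pieces. On the shifted line $x^{\Re w}=x^{1/2-\sigma+\eta}$, and using the vertical growth estimate $\log Z_{\Gam}(\tfrac12+\eta+iv)\ll_{\eta}|v|^{1+\eta}$ together with the $|w|^{-2}$ decay gives $\int_{|t|\le U}\frac{(T+|t|)^{1+\eta}}{(1+|t|)^{2}}\,dt\ll T^{1+\eta}$ (the range $|t|\le T$ contributes $T^{1+\eta}$, while $T<|t|\le U\asymp T^{2}$ contributes only $U^{\eta}=T^{2\eta}$), so this piece is $\ll T^{1+\eta}x^{1/2-\sigma+\eta}$, the second error term. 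The truncated tail of the original line $(c)$, where $\log Z_{\Gam}$ is bounded, is $\ll x^{c}\int_{U}^{\infty}t^{-2}\,dt\ll x^{1-\sigma}/U\ll T^{-2}x^{1-\sigma}$; and the horizontal segments at $\Im w=\pm U$, where $|\Im(s+w)|\asymp U\asymp T^{2}$ forces $|w(w+1)|\gg U^{2}$, are $\ll U^{-1+\eta}x^{1-\sigma}\ll T^{-2+2\eta}x^{1-\sigma}$. Both fall under the first error term, and collecting all pieces yields the stated formula.

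I expect the main obstacle to be twofold. First, securing the vertical-line growth bound for $\log Z_{\Gam}$ just to the right of the critical line: I would obtain it by integrating the standard estimate $\frac{Z_{\Gam}'}{Z_{\Gam}}(\sigma'+iv)\ll v$ (valid for $\sigma'\ge\tfrac12+\eta$, a consequence of the Weyl law by which the number of zeros of $Z_{\Gam}$ up to height $v$ grows like $\frac{\vol(\Gam\bsla H)}{4\pi}\,v^{2}$) inward from $\Re=2$, where $\log Z_{\Gam}$ is bounded and the horizontal segment avoids all zeros. Second, the bookkeeping of the branch cuts of $\log Z_{\Gam}$ at the real zeros: the correct choice of truncation height $U\asymp T^{2}$ is precisely what keeps the horizontal segments, the truncated tail, and the branch-cut detours simultaneously inside the single term $T^{-2}x^{1-\sigma}$, while not degrading the contribution of the shifted line beyond $T^{1+\eta}x^{1/2-\sigma+\eta}$.
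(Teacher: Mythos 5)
Your argument is correct, but it takes a genuinely different route from the paper's. The paper does not redo any contour integration here: it quotes Proposition~2.1 of \cite{HashSq}, which is exactly the truncated explicit formula for $Z'_{\Gam}/Z_{\Gam}$ with the Riesz kernel $x^{w}/(w(w+1))$ (the real zeros $\frac{1}{2}<\rho\le 1$ appearing there as the explicit terms $-x^{\rho-s}/((\rho-s)(1+\rho-s))$), and then passes from $Z'_{\Gam}/Z_{\Gam}$ to $\log Z_{\Gam}$ by integrating along the horizontal segment from $2+iT$ to $\sigma+iT$; the leftover $\psi_{\Gam,2+iT}(x)+\log Z_{\Gam}(2+iT)$ at the right endpoint is shown to be $O_{\eta,\Gam}(x^{-1+\eta})$ via the prime geodesic theorem, and the integrated $\rho$-terms produce the $T^{-2}x^{1-\sigma}$ error. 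You instead run the Perron/Riesz contour shift directly on $\log Z_{\Gam}(s+w)$. The underlying mechanism is the same --- same kernel, same shift to $\Re(s+w)=\frac{1}{2}+\eta$, same sources for the two error terms (the real zeros in $(\frac{1}{2},1]$ for $T^{-2}x^{1-\sigma}$, the shifted line for $T^{1+\eta}x^{1/2-\sigma+\eta}$) --- but your version is self-contained, at the price of managing the branch cuts of $\log Z_{\Gam}$ at $w=1-s$ and $w=\rho-s$ and supplying a growth bound for $\log Z_{\Gam}$ itself near the critical line, whereas the paper's route avoids multivaluedness entirely because $Z'_{\Gam}/Z_{\Gam}$ is single-valued and the horizontal segment at height $T\ge 1$ meets no zeros. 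Two small points to tidy, neither a real gap: with $U\asymp T^{2}$ your horizontal segments give $U^{-1+\eta}x^{1-\sigma}=T^{-2+2\eta}x^{1-\sigma}$, which is not literally $\ll T^{-2}x^{1-\sigma}$ uniformly in $x,T$ (taking $U=T^{2+3\eta}$ fixes this without affecting anything else); and on the line $\Re(s+w)=1+1/\log x$ one only has $\log Z_{\Gam}\ll\log\log x$ rather than $O(1)$, which is likewise harmless after the same adjustment of $U$.
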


\begin{proof}
According to Proposition 2.1 in \cite{HashSq}, we have 
\begin{align*}
\frac{Z'_{\Gam}(s)}{Z_{\Gam}(s)}=
&\sum_{\begin{subarray}{c}\gam\in\Prim(\Gam),j\geq1 \\ N(\gam)^j<x\end{subarray}} 
\frac{\log{N(\gam)}}{1-N(\gam)^{-j}}
\left(1-\frac{N(\gam)^{j}}{x} \right)N(\gam)^{-js} \\
&-\sum_{ \frac{1}{2}<\rho\leq 1 }
\frac{x^{\rho-s}}{(\rho-s)(1+\rho-s)}
+O_{\eta,\Gam}\left(T^{1+\epsilon}x^{\frac{1}{2}-\sigma+\eta}\right), 
\as T,x\tinf
\end{align*}
for any $\eta>0$, 
where $\{\frac{1}{2}<\rho\leq 1\}$ is the finite set of zeros of $Z_{\Gam}(s)$ on the real line.
The logarithm of $Z_{\Gam}(s)$ is given by   
\begin{align*}
\log{Z_{\Gam}(s)}
=&\int_{2+iT}^{\sigma+iT}\frac{Z'_{\Gam}(z)}{Z_{\Gam}(z)}dz
+\log{Z_{\Gam}(2+iT)}\\
=&-\psi_{\Gam,\sigma+iT}+\psi_{\Gam,2+iT}+\log{Z_{\Gam}(2+iT)}+O_{\eta,\Gam}\left(T^{-2}x^{1-\sigma}+T^{1+\eta}x^{1/2-\sigma+\eta}\right).
\end{align*}
Due to the prime geodesic theorem \eqref{pgt}, we have
\begin{align*}
&\left| \psi_{\Gam,2+iT}(x)+\log{Z_{\Gam}(2+iT)}\right|\\
&\leq  x^{-1}\sum_{\begin{subarray}{c}\gam\in\Prim(\Gam),j\geq1 \\ N(\gam)^j< x\end{subarray}} 
\frac{1}{j(1-N(\gam)^{-j})}N(\gam)^{-j}
+\sum_{\begin{subarray}{c}\gam\in\Prim(\Gam),j\geq1 \\ N(\gam)^j\geq x\end{subarray}} 
\frac{1}{j(1-N(\gam)^{-j})}
\left(1-\frac{N(\gam)^{j}}{x} \right)N(\gam)^{-2j}\\ 
&\ll_{\eta,\Gam}  x^{-1+\eta}, \as x\tinf  
\end{align*}
for any $\eta>0$.
We thus obtain Lemma \ref{exp}.
\end{proof}

Next, we study the function $\psi_{\Gam,s}(x)$ in more detail 
when $\Gam$ is a subgroup of the modular group or a quaternion  $\Gam_{\cO}$. 
Let 
$$
\Tr{\Gam}:=\{\tr{\gam} \divset \gam\in\Gam, \tr{\gam}>2\}, \qquad 
m_{\Gam}(n):=\sum_{\begin{subarray}{c} \gam\in\Prim(\Gam),j\geq1 \\ \tr{\gam^{j}}=n \end{subarray}} 
\frac{1}{j}.
$$
Since   
$$N(\gam)^{j/2}=\frac{1}{2}\left(\tr{\gam^{j}}+ \sqrt{(\tr{\gam^{j}})^{2}-4} \right),$$ 
the function $\psi_{\Gam,s}(x)$ 
can be expressed by 
\begin{align*}
\psi_{\Gam,s}(x)=&
 \sum_{\begin{subarray}{c} n\in \Tr{(\Gam)} \\ n<X \end{subarray}} 
m_{\Gam}(n) \Delta(n)\left(1-\frac{\epsilon(n)^{2}}{x}\right)\epsilon(n)^{-2s}, 
\end{align*}
where $X:=x^{1/2}+x^{-1/2}$ and   
\begin{align*}
\epsilon(n):=\frac{1}{2}\left(n+\sqrt{n^{2}-4}\right), \qquad 
\Delta(n):=\frac{1}{1-\epsilon(n)^{-2}}.
\end{align*}
It is easy to see that $\Tr{\left(\sz\right)}=\bZ_{\geq 3}$,  
and $\Tr{\Gam}$ is a non-sparse subset of $\bZ_{\geq 3}$ 
when $\Gam$ is a subgroup of the modular group $\sz$ or a quaternion $\Gam_{\cO}$ of finite index. 
Then, by taking $m_{\Gam}(n)=0$ for $n\not \in \Tr{\Gam}$, we can express $\psi_{\Gam,s}(x)$ by 
\begin{align}\label{phi}
\psi_{\Gam,s}(x)=&
\sum_{\begin{subarray}{c} n\in \bZ \\ 3\leq n<X \end{subarray}} 
m_{\Gam}(n)\Delta(n)\left(1-\frac{\epsilon(n)^{2}}{x}\right)\epsilon(n)^{-2s}. 
\end{align}
When $\Gam=\sz$, $\Gam=\Gam_{\cO}$ or 
$\Gam$ is a congruence subgroup of $\sz$,  
it has been known that 
$m_{\Gam}(n)$ can be written as a sum of the class numbers of primitive indefinite binary quadratic forms in the narrow sense 
\cite{Sa1,AKN,HashLS}. 
It is also known that $m_{\Gam}(n)$ is bounded as follows.

\begin{lem}\label{lemclass} (Lemma 3.1 in \cite{HashSq})
When $\Gam$ is a subgroup of the modular group $\sz$ or a quaternion $\Gam_{\cO}$, 
we have
\begin{align}\label{LSineq}
m_{\Gam}(n)\ll_{\eta,\Gam} n^{1+\eta}, \as n\tinf
\end{align}
for any $\eta>0$.
\end{lem}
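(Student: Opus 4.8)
The plan is to deduce the bound from the classical estimate for class numbers of indefinite binary quadratic forms, using the Dirichlet class number formula as the only analytic input. First I would reduce to the full modular group $\sz$ (and, for the quaternion case, to the maximal-order group $\Gam_{\cO}$). If $\Gam\subset\sz$ has finite index, every hyperbolic $\gam\in\Gam$ already lies in $\sz$ with the same trace, and each $\sz$-conjugacy class meets $\Gam$ in at most $[\sz:\Gam]$ many $\Gam$-conjugacy classes (the fibres are indexed by the double cosets $\Gam\bsla\sz/C_{\sz}(\gam)$, of which there are at most $[\sz:\Gam]$). Since the weight $1/j$ in the definition of $m_{\Gam}(n)$ never exceeds $1$, this gives
\[
m_{\Gam}(n)\;\leq\;[\sz:\Gam]\,\#\{[\delta]_{\sz}\divset \delta\text{ hyperbolic},\ \tr{\delta}=n\},
\]
and the identical reduction applies to finite-index subgroups of $\Gam_{\cO}$.

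Next I would invoke the class-number interpretation recorded in \cite{Sa1,AKN,HashLS}: the hyperbolic conjugacy classes of trace $n$ in $\sz$ (respectively $\Gam_{\cO}$) are parametrized by $\sz$-equivalence classes of integral binary quadratic forms of discriminant $n^{2}-4$, in the quaternion case subject to finitely many local conditions imposed by the order $\cO$. Writing an imprimitive form of discriminant $n^{2}-4$ as $f$ times a primitive form of discriminant $(n^{2}-4)/f^{2}$, and noting that the local conditions in the quaternion case can only \emph{reduce} the count, the right-hand side above is bounded by a constant (depending on $\Gam$) times
\[
\sum_{\substack{f\geq1\\ f^{2}\mid(n^{2}-4)}} h^{+}\!\Big(\tfrac{n^{2}-4}{f^{2}}\Big),
\]
where $h^{+}(D)$ denotes the narrow class number and the sum runs over those $f$ for which $(n^{2}-4)/f^{2}$ is a discriminant.

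I would then bound each term by the Dirichlet class number formula $h^{+}(D)\log\varepsilon_{D}\asymp\sqrt{D}\,L(1,\chi_{D})$, with $\varepsilon_{D}$ the narrow fundamental unit of the order of discriminant $D$. The trivial bounds $L(1,\chi_{D})\ll\log D$ and $\log\varepsilon_{D}\gg1$ (uniform, since every real quadratic fundamental unit is at least $\tfrac{1+\sqrt5}{2}$) yield $h^{+}(D)\ll\sqrt{D}\,\log D$. As $D=(n^{2}-4)/f^{2}\leq n^{2}$, each summand is $\ll(n/f)\log n$, whence
\[
m_{\Gam}(n)\ll_{\Gam}\log n\sum_{f\mid(n^{2}-4)}\frac{n}{f}\ll n\log n\,\log\log n\ll_{\eta,\Gam}n^{1+\eta},
\]
which is the claim.

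The main obstacle is the second step: making the class-number parametrization precise and uniform across all the groups in question. Concretely, one must verify that the multiplicities in the correspondence, and the local embedding conditions coming from the quaternion order $\cO$, only diminish the count, so that the clean majorant $\sum_{f}h^{+}((n^{2}-4)/f^{2})$ holds with an implied constant depending only on $\Gam$ (essentially the index, together with the finitely many ramified primes of $B$). Once this arithmetic bookkeeping is settled, the remaining analytic ingredient—the elementary bound $h^{+}(D)\ll\sqrt{D}\,\log D$—is entirely standard and requires no deep input such as subconvexity or GRH.
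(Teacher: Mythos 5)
Your argument is correct, and it follows essentially the same route as the source the paper relies on: the paper itself gives no proof of Lemma \ref{lemclass}, quoting it from Lemma 3.1 of \cite{HashSq}, whose proof likewise rests on the class-number interpretation of $m_{\Gam}(n)$ recorded in \cite{Sa1,AKN,HashLS} together with the Dirichlet class number formula and the uniform lower bound on the regulator. Your reduction to the full group via the $[\sz:\Gam]$ bound on fibres of conjugacy classes and the final estimate $\sum_{f}h^{+}((n^{2}-4)/f^{2})\ll n\log n\log\log n$ are both sound.
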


\section{Proof of the universality theorem}

In this section, we prove Theorem \ref{thm1} by using the propositions and lemmas stated in \S 2 and \S 3. 

\subsection{Linearly independent subset of $\{\log{\epsilon(n)}\}$} 

As studied in \S 3, we see that 
$\log{Z_{\Gam}(x)}$ is written by the Dirichlet series 
over $\Lambda=\{2\log{\epsilon(n)} \divset n\in\bZ_{\geq 3} \}$.
We now generate a linearly independent subset of $\{\log{\epsilon(n)}\}$ over $\bQ$.
Let $\cT\subset \bZ$ be 
$$
\cT:=\left\{ n\geq 3 \divset \text{$\epsilon(n)\neq \epsilon(n_{0})^{k}$ for any $k\geq 2$, $n_{0}\geq3$}\right\}.
$$
For example, $7,14,18\not\in \cT$  since $\epsilon(7)=\epsilon(3)^{2}$, $\epsilon(14)=\epsilon(4)^{2}$, 
$\epsilon(18)=\epsilon(3)^{3}$, 
and $\{3\leq n\leq 18 \divset n\neq 7,14,18\}\subset \cT$.
The following lemma shows that $\cT$ gives a linearly independent subset of $\{\log{\epsilon(n)}\}$ 
and the integers not in $\cT$ are distributed sparsely.

\begin{lem}\label{lemlin} Let $\cT$ be as above. 
Then the following (1) and (2) hold. \\
(1) The set $\left\{\log{\epsilon(n)}\divset n\in \cT\right\}$
is linearly independent over $\bQ$. \\
(2) For any $\eta>0$, we have
$$\#\left\{ n\in\bZ \divset 3\leq n<x, n\not\in\cT \right\}\ll_{\eta} x^{1/2+\eta},\as x\tinf.$$ 
\end{lem}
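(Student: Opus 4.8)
\noi\textbf{Proof proposal.} The plan is to reformulate membership in $\cT$ in terms of units of real quadratic fields and then treat (1) and (2) separately. Write $d(n)$ for the squarefree part of $n^{2}-4$. Since $\epsilon(n)$ is the larger root of $X^{2}-nX+1$, it is an algebraic integer of norm $1$ lying in $\bQ(\sqrt{d(n)})$, and conversely every unit $u>1$ of norm $1$ in a real quadratic field equals $\epsilon(\tr{u})$ with $\tr{u}=u+u^{-1}\in\bZ_{\geq3}$. Hence, if $u_{d}$ denotes the smallest unit $>1$ of norm $1$ in $\bQ(\sqrt{d})$ (so $u_{d}=\varepsilon_{d}$ or $\varepsilon_{d}^{2}$ according as the fundamental unit $\varepsilon_{d}$ has norm $+1$ or $-1$), the set $\{\epsilon(n):d(n)=d\}$ is exactly $\{u_{d}^{m}:m\geq1\}$. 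I would then read off from the definition that $n\in\cT$ if and only if $\epsilon(n)=u_{d(n)}$: any $u_{d}^{m}$ with $m\geq2$ equals $\epsilon(\tr{u_{d}})^{m}$ and so is excluded, while $u_{d}$ itself is not a proper power $\epsilon(n_{0})^{k}$ since $\epsilon(n_{0})$ would lie in the same field, forcing $\epsilon(n_{0})=u_{d}^{j}$ and $jk=1$. Consequently $n\mapsto d(n)$ is \emph{injective} on $\cT$, and for $n\in\cT$ the number $\epsilon(n)$ is the fundamental norm-$1$ unit of a real quadratic field, with distinct $n$ giving distinct fields. This reduction is the conceptual heart of the lemma.

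For (1), suppose $\sum_{n\in\cT}c_{n}\log{\epsilon(n)}=0$ with $c_{n}\in\bQ$ not all zero; clearing denominators yields a multiplicative relation $\prod_{i}u_{d_{i}}^{c_{i}}=1$ with $c_{i}\in\bZ\setminus\{0\}$ and $d_{1},\dots,d_{r}$ distinct squarefree integers $\geq2$. This is an identity in the totally real multiquadratic field $L=\bQ(\sqrt{d_{1}},\dots,\sqrt{d_{r}})$, hence preserved by $G=\Gal(L/\bQ)$. Fixing a real embedding $\iota$ and using $g(u_{d_{i}})=u_{d_{i}}^{\chi_{i}(g)}$, where $\chi_{i}\colon G\to\{\pm1\}$ is the quadratic character cutting out $\bQ(\sqrt{d_{i}})$ (the nontrivial automorphism of $\bQ(\sqrt{d_{i}})$ sends the norm-$1$ unit $u_{d_{i}}$ to $u_{d_{i}}^{-1}$), I would apply $\log|\iota(\cdot)|$ to the $G$-translates of the relation to get $\sum_{i}\alpha_{i}\chi_{i}(g)=0$ for every $g\in G$, with $\alpha_{i}=\pm c_{i}\log{u_{d_{i}}}\neq0$. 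Since the $d_{i}$ are distinct, the $\chi_{i}$ are distinct nontrivial characters of $G$, so Artin--Dedekind independence of characters forces every $\alpha_{i}=0$, a contradiction.

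For (2), if $n\notin\cT$ then $\epsilon(n)=\epsilon(n_{0})^{k}$ for some $n_{0}\geq3$ and $k\geq2$, whence $n=\epsilon(n_{0})^{k}+\epsilon(n_{0})^{-k}>\epsilon(n_{0})^{k}$. Thus every such $n<x$ arises from a pair $(n_{0},k)$ with $k\geq2$ and $\epsilon(n_{0})^{k}<x$. Using $n_{0}-\tfrac12<\epsilon(n_{0})<n_{0}$, for each fixed $k$ there are $\ll x^{1/k}$ admissible $n_{0}$, while $\epsilon(3)^{k}<x$ forces $k\ll\log{x}$. Summing $\sum_{2\leq k\ll\log{x}}x^{1/k}$, the term $k=2$ dominates and the remaining terms contribute $\ll(\log{x})\,x^{1/3}$, so the count is $\ll x^{1/2}$, which is certainly $\ll_{\eta}x^{1/2+\eta}$.

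The main obstacle is (1). The delicate point is not the character computation but guaranteeing that it applies: one must exclude any $\bQ$-dependence hidden by coincidences among the fields. The argument above handles this because the definition of $\cT$ forces exactly one fundamental unit per field, and distinct squarefree $d_{i}$ always give distinct characters of $G$ — even when some product of a subset of the $d_{i}$ is a perfect square, so that $L$ has degree strictly less than $2^{r}$. I expect (2) to be entirely routine by comparison.
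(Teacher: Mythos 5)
Your proof is correct, and part (1) takes a genuinely different route from the paper. The paper disposes of (1) by citing Lemma 4.1 of Rudnick's paper \cite{Ru}, which says that any nontrivial $\bZ$-linear relation among the $\log\epsilon(n_i)$ forces all the $\epsilon(n_i)$ into a single real quadratic field; once in one field, each $\epsilon(n_i)$ is a power of the smallest norm-one unit $\epsilon(n_0)>1$ of that field, which contradicts distinctness and membership in $\cT$ exactly as in your single-field observation. You instead prove the cross-field independence yourself: after the (correct, and worth spelling out) identification of $\cT$ with the set of traces of the fundamental norm-one units $u_d$, with $n\mapsto d(n)$ injective on $\cT$, you pass to the multiquadratic field, use $g(u_{d_i})=u_{d_i}^{\chi_i(g)}$, and invoke Artin's linear independence of the distinct quadratic characters $\chi_i$. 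Your parenthetical point that distinct squarefree $d_i$ give distinct characters even when $[L:\bQ]<2^{r}$ is exactly the place where a naive argument could slip, and you handle it correctly. The trade-off is that the paper's proof is shorter but leans on an external lemma, while yours is self-contained and makes the arithmetic structure of $\cT$ (one fundamental unit per field) explicit. For part (2) your argument is essentially identical to the paper's count of $\#T_k(x)\ll x^{1/k}$ summed over $2\le k\ll\log x$; you even obtain the slightly cleaner bound $\ll x^{1/2}$.
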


\begin{proof}
(1) Assume that $\left\{\log{\epsilon(n)}\divset n\in \cT\right\}$
is not linearly independent over $\bQ$, 
i.e. there exist distinct $n_{1},\dots,n_{N}\in \cT$
and non-zero $k_{1},\dots,k_{N}\in\bZ$ 
such that 
$$
k_{1}\log{\epsilon(n_{1})}+\cdots+k_{N}\log{\epsilon(n_{N})}=0. 
$$ 
According to Lemma 4.1 in \cite{Ru}, we see that 
$\epsilon(n_{1}),\dots,\epsilon(n_{N})$ lie in the same quadratic field, 
which means that there exist a non-square integer $D>0$ and integers $u_{1},\dots,u_{N}\geq1$ such that
$n_{i}^{2}-4=Du_{i}^{2}$ for $1\leq i\leq N$. 
Since $\epsilon(n_{i})=\frac{1}{2}(n_{i}+\sqrt{n_{i}^{2}-4})>1$ 
is a unit of the integer ring $\cO$ of $\bQ(\sqrt{D})$, 
there exists an integer $l_{i}\geq1$ such that 
\begin{align}\label{indep}
\epsilon(n_{i})=\epsilon_{0}(D)^{l_{i}}
\end{align}
for $1\leq i\leq N$, 
where $\epsilon_{0}(D)=\frac{1}{2}\left(n_{0}+u_{0}\sqrt{D}\right)$ is the fundamental unit of $\cO_{D}$. 
Here $(n_{0},u_{0})$ satisfies $n_{0}^{2}-Du_{0}^{2}=4$ 
and then 
$$\epsilon_{0}(D)=\frac{1}{2}\left(n_{0}+\sqrt{n_{0}^{2}-4}\right)=\epsilon(n_{0}).$$
This means that \eqref{indep} contradicts the fact that $n_{1},\dots,n_{N}$ are distinct and are elements of $\cT$.  

\vbpt

\noi (2) For $k\geq2$, let 
$$T_{k}(x):=\left\{n\in\bZ \divset 3\leq n\leq x, \text{$\epsilon(n)=\epsilon(n_{0})^{k}$ for some $n_{0}\geq3$}\right\}.$$
We see that $n\in T_{k}$ satisfies 
\begin{align*}
n=\epsilon(n)+\epsilon(n)^{-1}
=\epsilon(n_{0})^{k}+\epsilon(n_{0})^{-k}
=n_{0}^{k}-O(n_{0}^{k-2})
\end{align*}
for some $n_{0}\geq3$ and then
$$\#T_{k}(x)\ll x^{1/k},\as x\tinf.$$
We thus have 
\begin{align}
\left\{ n\in\bZ \divset 3\leq n\leq x, n\not\in \cT \right\}
\leq \sum_{k\geq2}\#T_{k}(x)\ll_{\eta} x^{1/2+\eta},\as x\tinf
\end{align}
for any $\eta>0$.
\end{proof}

\subsection{Partial Dirichlet series}

In the proof of Theorem \ref{thm1}, 
we will divide $\log{Z_{\Gam}}(s)$ by several partial Dirichlet series. 
For a set $A$ of the integers $n\geq3$  
and a series $\{a_{n}\}_{n\in A}\subset [0,1)$, define 
\begin{align*} 
L_{\Gam}(s;A):=&\sum_{\begin{subarray}{c} n\in  A \end{subarray}}
m_{\Gam}(n) \Delta(n)\epsilon(n)^{-2s},\\
L_{\Gam}(s;A;\{a_{n}\}):=&\sum_{\begin{subarray}{c} n\in A \end{subarray}}
m_{\Gam}(n) \Delta(n)\epsilon(n)^{-2s}e\left(a_{n}\right),
\end{align*}
where $e(x):=e^{2\pi ix}$.
For example, 
\begin{align*}
L_{\Gam}(s;\cT\cap [X,Y);\{a_{n}\})
=&\sum_{\begin{subarray}{c} n\in \cT \\ X\leq n<Y \end{subarray}}
m_{\Gam}(n) \Delta(n)\epsilon(n)^{-2s}e\left(a_{n}\right),\\
L_{\Gam}(s;\cT\cap [Y,\infty))
=&\sum_{\begin{subarray}{c} n\in \cT \\ n\geq Y \end{subarray}}
m_{\Gam}(n) \Delta(n)\epsilon(n)^{-2s}. 
\end{align*}
It is clear that $L_{\Gam}(s;\Tr{\Gam})=L_{\Gam}(s;\bZ_{\geq3})=-\log{Z_{\Gam}}(s)$.  
We now prove the following lemma. 

\begin{lem}\label{lemsq}
Let $\Gam$ be a subgroup of the modular group $\sz$
or a co-compact arithmetic group $\Gamma_{\mathcal{O}}$. 
Then the following (1) and (2) hold. 

\vapt

\noi (1) Let $\bar{\cT}:=\bZ_{\geq3}\bsla \cT$.  The series 
\begin{align}\label{sq}
L_{\Gam}(s;\bar{\cT}):=\sum_{\begin{subarray}{c} n\not\in\cT,n\geq3 \end{subarray}}m_{\Gam}(n)\Delta(n)\epsilon(n)^{-2s}
\end{align}
converges absolutely for $\Re{s}>3/4$.

\vapt

\noi (2) For $\frac{1}{2}<\sigma<1$ and $0<Y< T^{3/2}$, we have 
\begin{align}\label{sqint}
\frac{1}{T}\int_{1}^{T}\left|L_{\Gam}\left(\sigma+it; \cT\cap [Y,\infty)\right)\right|^{2}dt 
\ll_{\eta,\Gam}  Y^{3-4\sigma+\eta}+T^{5-6\sigma+\eta}, \as Y,T\tinf
\end{align}
for any $\eta>0$.
\end{lem}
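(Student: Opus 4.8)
The plan is to extract both bounds from three inputs: the coefficient bound $m_{\Gam}(n)\ll_{\eta,\Gam}n^{1+\eta}$ of Lemma \ref{lemclass}, the sparsity $\#\{n\leq x: n\not\in\cT\}\ll_{\eta}x^{1/2+\eta}$ of Lemma \ref{lemlin}(2), and the elementary facts $\epsilon(n)\asymp n$ and $\Delta(n)=1+O(n^{-2})$, which together give $|m_{\Gam}(n)\Delta(n)\epsilon(n)^{-2s}|\ll_{\eta,\Gam}n^{1-2\sigma+\eta}$ with $\sigma=\Re{s}$. For part (1) I would sum this bound over the sparse set $\bar{\cT}$ by a dyadic decomposition: the block $[2^{k},2^{k+1})$ carries $\ll 2^{k(1/2+\eta)}$ integers of $\bar{\cT}$, each contributing $\ll 2^{k(1-2\sigma+\eta)}$, so the block total is $\ll 2^{k(3/2-2\sigma+2\eta)}$ and the geometric sum over $k$ converges exactly when $\sigma>3/4$ (after shrinking $\eta$), giving absolute convergence there.

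For part (2), write $b_{n}:=m_{\Gam}(n)\Delta(n)\epsilon(n)^{-2\sigma}$ and $\lambda_{n}:=2\log{\epsilon(n)}$, so that the object is a generalized Dirichlet polynomial $\sum_{n} b_{n}e^{-i\lambda_{n}t}$ with $|b_{n}|\ll n^{1-2\sigma+\eta}$. First I would pass from this (analytically continued) tail to a genuine finite polynomial of length $\asymp T^{3/2}$ using the explicit formula of Lemma \ref{exp} with the cutoff $x\asymp T^{3}$, for which the truncation parameter is $X=x^{1/2}+x^{-1/2}\asymp T^{3/2}$; the hypothesis $Y<T^{3/2}$ guarantees that the range $[Y,X)$ is non-empty. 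The key numerical point is that at $x\asymp T^{3}$ the error $T^{1+\eta}x^{1/2-\sigma+\eta}$ of Lemma \ref{exp} becomes $\asymp T^{5/2-3\sigma+\eta}$, whose square is $T^{5-6\sigma+\eta}$; carrying the running variable $t\in[1,T]$ through the two error terms of Lemma \ref{exp} and integrating (using the convergence of $\int t^{-4}\,dt$ to tame the first term near $t=1$), one checks that, after folding in the absolutely convergent $\bar{\cT}$-series of part (1) and the exact initial segment $n<Y$, the error $E(t)$ in $L_{\Gam}(\sigma+it;\cT\cap[Y,\infty))=P(t)+E(t)$ satisfies $\frac{1}{T}\int_{1}^{T}|E(t)|^{2}\,dt\ll_{\eta,\Gam}T^{5-6\sigma+\eta}$, where $P(t)=\sum_{n\in\cT,\,Y\leq n<X}m_{\Gam}(n)\Delta(n)(1-\epsilon(n)^{2}/x)\epsilon(n)^{-2s}$ (the smoothing factor $1-\epsilon(n)^{2}/x$ lies in $(0,1)$ for $n<X$ and is harmless for upper bounds).

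It then remains to bound $\frac{1}{T}\int_{1}^{T}|P(t)|^{2}\,dt$ by expanding the square. The diagonal gives $\frac{T-1}{T}\sum_{Y\leq n<X}|b_{n}|^{2}\ll\sum_{n\geq Y}n^{2-4\sigma+\eta}\ll Y^{3-4\sigma+\eta}$ when $\sigma>3/4$; for $\sigma\leq 3/4$ the truncated diagonal is instead $\ll X^{3-4\sigma+\eta}\ll T^{9/2-6\sigma+\eta}$, which is absorbed by the second term. For the off-diagonal I would bound the oscillatory integrals by $|\lambda_{m}-\lambda_{n}|^{-1}$ and use $|\lambda_{m}-\lambda_{n}|\gg |m-n|/\max(m,n)$ (a consequence of $\lambda_{n}=2\log\epsilon(n)$ and $\epsilon(n)\asymp n$); organizing by $h=|m-n|$, the dominant range $h\leq n$ contributes $\ll\frac{1}{T}\sum_{h\geq1}\frac{1}{h}\sum_{n<X}n^{3-4\sigma+\eta}\ll\frac{1}{T}(\log X)X^{4-4\sigma+\eta}\ll T^{5-6\sigma+\eta}$, using $X\asymp T^{3/2}$, and the complementary range $h>n$ is no larger. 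Combining the diagonal, the off-diagonal, and $E(t)$ via $(a+b)^{2}\leq 2a^{2}+2b^{2}$ yields the stated estimate. (Equivalently, the diagonal/off-diagonal split may be packaged as the Montgomery--Vaughan mean value theorem with minimal gap $\delta_{n}\asymp 1/n$, whose error term $\frac{1}{T}\sum|b_{n}|^{2}\delta_{n}^{-1}\ll\frac{1}{T}X^{4-4\sigma+\eta}$ produces the same $T^{5-6\sigma+\eta}$.)

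The main obstacle is the off-diagonal estimate together with the truncation that feeds it. Because the frequencies $\lambda_{n}=2\log\epsilon(n)$ cluster, with gaps shrinking like $1/n$, the off-diagonal (equivalently the Montgomery--Vaughan error) over an \emph{untruncated} tail diverges like $\sum_{n}n^{3-4\sigma}$ for every $\sigma<1$; it is only the reduction to a polynomial of length $\asymp T^{3/2}$---forced by the choice $x\asymp T^{3}$ in Lemma \ref{exp} and legitimized by the hypothesis $Y<T^{3/2}$---that tames it to $T^{5-6\sigma+\eta}$. Making this reduction precise, namely realizing the conditionally defined tail as $P(t)+E(t)$ with the mean square of $E$ under control and verifying $\delta_{n}\asymp 1/n$ on $\cT$, is the delicate step; the diagonal bound and part (1) are then routine consequences of Lemmas \ref{lemclass} and \ref{lemlin}.
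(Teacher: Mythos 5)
Your proposal is correct and follows essentially the same route as the paper: part (1) via the coefficient bound of Lemma \ref{lemclass} together with the sparsity of $\bar{\cT}$ from Lemma \ref{lemlin}(2), and part (2) via the explicit formula of Lemma \ref{exp} with $x\asymp T^{3}$ to reduce to a Dirichlet polynomial of length $\asymp T^{3/2}$, followed by a diagonal/off-diagonal mean-value estimate using $\int_{1}^{T}(\epsilon(n_{1})/\epsilon(n_{2}))^{2it}\,dt\ll|\log(\epsilon(n_{1})/\epsilon(n_{2}))|^{-1}\ll n_{1}/|n_{2}-n_{1}|$. The only caution is that the $n<Y$ initial segment and the $\bar{\cT}$-piece folded into your $E(t)$ must also be treated by the mean-square (not pointwise) argument, exactly as the paper does by keeping them inside the double sum via the weight $\Delta(n,x,Y)$.
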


\begin{proof} (1) Due to Lemma \ref{lemclass} and  (2) of Lemma \ref{lemlin}, we have 
\begin{align*}
\sum_{\begin{subarray}{c} n\not\in\cT, 3\leq n\leq x \end{subarray}}m_{\Gam}(n)\Delta(n)\epsilon(n)^{-2s}
\ll_{\eta}\sum_{\begin{subarray}{c}  n\not\in\cT,3\leq n\leq x \end{subarray}}n^{1-2\Re{s}+\eta}
\ll_{\eta}x^{\max{(0,3/2-2\Re{s}+\eta)}} 
\end{align*}
for any $\eta>0$. 
Then the series \eqref{sq} converges absolutely for $\Re{s}>3/4$.

\vapt

\noi (2) Recall that $X=x^{1/2}+x^{-1/2}$ and suppose that $X>Y$. 
Due to Lemma \ref{exp}, we have
\begin{align*}
L_{\Gam}\left(s; \cT\cap [Y,\infty)\right)
=&-\log{Z_{\Gam}(s)}-L_{\Gam}\left(s; \cT\cap [3,Y)\right)
-L_{\Gam}(s;\bar{\cT})\\
=&\psi_{s}(x)-L_{\Gam}\left(s; \cT\cap [3,Y)\right)
-L_{\Gam}(s;\bar{\cT})
+O_{\eta,\Gam}(T^{-2}x^{1-\sigma}+T^{1+\eta}x^{1/2-\sigma+\eta})\\
=&\sum_{n\in \cT,3\leq n<X} m_{\Gam}(n)\Delta(n,x,Y)\epsilon(n)^{-2s}\\
&+\sum_{n\not\in \cT,n\geq 3} m_{\Gam}(n)\Delta(n,x,Y)\epsilon(n)^{-2s}
+O_{\eta,\Gam}\left(T^{-2}x^{1-\sigma}+T^{1+\eta}x^{1/2-\sigma+\eta}\right)\\
=:&M_{1}+M_{2}+O_{\eta,\Gam}\left(T^{-2}x^{1-\sigma}+T^{1+\eta}x^{1/2-\sigma+\eta}\right),
\end{align*}
where 
$$
\Delta(n,x,Y)= \begin{cases} 
-\Delta(n)\epsilon(n)^{2}x^{-1}, &(n<Y),\\
\disp \Delta(n)\left(1-\frac{\epsilon(n)^{2}}{x}\right), &(Y\leq n<X).
\end{cases}
$$
We can get $M_{2}\ll_{\eta,\Gam} x^{3/4-\sigma+\eta}$ easily from (1). 
Then the square integral of $L_{\Gam}^{(1)}(s;[Y,\infty))$ is given as follows.
\begin{align}\label{LL1}
&\frac{1}{T}\int_{1}^{T}\left|L_{\Gam}(\sigma+it;\cT\cap [Y,\infty))\right|^{2}dt\notag\\
\ll & \sum_{n_{1},n_{2}<X}
m_{\Gam}(n_{1}) m_{\Gam}(n_{2})\Delta(n_{1},x,Y)\Delta(n_{2},x,Y)\epsilon(n_{1})^{-2\sigma}\epsilon(n_{2})^{-2\sigma}
\frac{1}{T}\int_{1}^{T}\left(\frac{\epsilon(n_{1})}{\epsilon(n_{2})} \right)^{2it}dt\notag\\
&+O_{\eta,\Gam}\left(x^{3/2-2\sigma+\eta}+T^{-1}x^{2-2\sigma}+T^{2+\eta}x^{1-2\sigma+\eta}\right).
\end{align}
Divide the double sum in the right hand side by $\sum_{n_{1}=n_{2}}+\sum_{n_{1}\neq n_{2}}=:S_{1}+S_{2}$. 
Due to Lemma \ref{lemclass}, we see that 
\begin{align}
S_{1}&=\sum_{3\leq n <X}m_{\Gam}(n)^{2}\Delta(n,x,Y)^{2}\epsilon(n)^{-4\sigma}\notag \\
&=x^{-2}\sum_{3\leq n <Y}m_{\Gam}(n)^{2}\Delta(n)^{2}\epsilon(n)^{4-4\sigma} 
+\sum_{Y\leq n <X} m_{\Gam}(n)^{2}\Delta(n)^{2}\left(1-\frac{\epsilon(n)^{2}}{x}\right)^{2}\epsilon(n)^{-4\sigma}\notag \\
&\ll_{\eta,\Gam}  x^{-2} Y^{7-4\sigma+\eta}+Y^{3-4\sigma+\eta}. \label{LL2}
\end{align}
Furthermore, since
\begin{align*}
\int_{1}^{T}\left(\frac{\epsilon(n_{1})}{\epsilon(n_{2})} \right)^{2it}dt
\ll\frac{1}{\left|\log{\frac{\epsilon(n_{1})}{\epsilon(n_{2})}}\right|}
\ll \begin{cases} \disp \frac{n_{1}}{n_{2}-n_{1}}, &(n_{1}<n_{2}<2n_{1}),\\
1,& (n_{2}\geq 2n_{1}) ,
\end{cases}.
\end{align*}
we have 
\begin{align}
S_{2}& \ll T^{-1} \sum_{n_{1}<X}m_{\Gam}(n_{1}) \Delta(n_{1},x,Y)\epsilon(n_{1})^{-2\sigma}
\sum_{n_{1}<n_{2}<X}m_{\Gam}(n_{2}) \Delta(n_{2},x,Y)\epsilon(n_{2})^{-2\sigma}
\frac{1}{\left|\log{\frac{\epsilon(n_{1})}{\epsilon(n_{2})}}\right|}\notag\\
& \ll_{\eta,\Gam}  T^{-1} \sum_{n_{1}<X}n_{1}^{1-2\sigma+\eta}
\left(\sum_{n_{1}<n_{2}<2n_{1}}n_{2}^{1-2\sigma+\eta}\frac{n_{1}}{n_{2}-n_{1}}
+\sum_{n_{2}\geq 2n_{1}}n_{2}^{1-2\sigma+\eta}\right)\notag \\
&\ll_{\eta,\Gam} T^{-1} x^{2-2\sigma+\eta}. \label{LL3}
\end{align}
Choosing $x=T^{3}$, i.e. $Y<X\sim T^{3/2}$, we can obtain \eqref{sqint} from \eqref{LL1}-\eqref{LL3}. 
\end{proof}

\subsection{Proof of Theorem \ref{thm1}} 

We now give the proof the universality theorem of $\log{Z_{\Gam}(s)}$, 
which is enough to prove the universality theorem of $Z_{\Gam}(s)$ itself 
since 
$$
Z_{\Gam}(s+i\tau)-f(s)=f(s)\left(e^{\log{Z_{\Gam}(s+i\tau)}-\log{f(s)}}-1\right).
$$
Let $X_{1}>0$ and 
divide $\log{f(s)}-\log{Z_{\Gam}(s+i\tau)}$ by 
\begin{align}\label{divuniv}
\begin{split}
\log{f(s)}-\log{Z_{\Gam}(s+i\tau)}
=&\left(\log{f(s)} +L_{\Gam}(s;[3,X_{1})) +L_{\Gam}(s+i\tau;\cT\cap[X_{1},\infty))\right)\\
&+\left(L_{\Gam}(s+i\tau; [3,X_{1}))-L_{\Gam}(s;[3,X_{1})) +L_{\Gam}(s+i\tau;\bar{\cT}\cap [X_{1},\infty))\right).
\end{split}
\end{align}

Due to the prime geodesic theorem \eqref{pgt} and (2) of Lemma \ref{lemlin}, 
we can easily check that
the series 
$$\sum_{\begin{subarray}{c}n\in \cT\end{subarray}}m_{\Gam}(n)\Delta(n)\epsilon(n)^{-2s}$$ 
satisfies the packing condition \eqref{packing}. 
Then, applying Proposition \ref{appr} with 
$$g(x)=\log{f(s)}+L_{\Gam}(s;[3,X_{1})),$$ 
we see that there exist $X_{2}>X_{1}$ and a series $\{\theta_{n}\}_{n\in \cT\cap[X_{1},X_{2}]}\subset [0,1)$ 
such that 
\begin{align} \label{theta}
\begin{split}
&\left| \log{f(s)}+L_{\Gam}(s;[3,X_{1}))
-L_{\Gam}(s;\cT\cap [X_{1},X_{2});\{\theta_{n}\}) \right| \\
\ll &\sum_{\begin{subarray}{c}n\in \cT\\ X_{1}\leq n<X_{2}\end{subarray}}m_{\Gam}(n)^{2}\Delta(n)^{2}\epsilon(n)^{-4\sigma} 
\ll_{\eta} X_{1}^{3-4\sigma+\eta}.
\end{split}
\end{align}
Define the series $\{\btheta_{n}\}_{n\in \cT}$ 
and the subset $S_{T}(\delta)$ of the interval $[0,T]$ by
\begin{align*}
\btheta_{n}:=&\begin{cases} \theta_{n}, & (X_{1}\leq n< X_{2}),\\ 0,& (\text{otherwise}),
\end{cases}\\
S_{T}(\delta):= &\left\{\tau\in[0,T] \Bigdivset \left\| \frac{\tau\log{\epsilon(n)}}{\pi}-\btheta_{n} \right\| <\delta 
\quad \text{for any $n\in \cT\cap[3,X_{2})$} \right\}.
\end{align*}
Then, for $\tau\in S_{T}(\delta)$, we have 
\begin{align}\label{l1}
\begin{split}
&L_{\Gam}(s+i\tau;\cT\cap [X_{1},X_{2}))-L_{\Gam}(s;\cT\cap [X_{1},X_{2});\{\theta_{n}\})\\
& = \sum_{\begin{subarray}{c}n\in\cT \\ X_{1}\leq n<X_{2}\end{subarray}}
m_{\Gam}(n)\Delta(n)\epsilon(n)^{-2s}(\epsilon(n)^{-2i\tau}-e(\theta_{n})) 
\ll_{\eta} \delta X_{2}^{2-2\sigma+\eta}. 
\end{split}
\end{align}
We also have
\begin{align}\label{l2}
&L_{\Gam}(s+i\tau; [3,X_{1}))-L_{\Gam}(s;[3,X_{1})) +L_{\Gam}(s+i\tau;\bar{\cT}\cap [X_{1},\infty))\notag\\
&=\sum_{n\in \bZ,3\leq n<X_{1}}m_{\Gam}(n)\Delta(n)\epsilon(n)^{-2s}(\epsilon(n)^{-2i\tau}-1)
+\sum_{n\not\in \cT,n\geq X_{1}}
m_{\Gam}(n)\Delta(n)\epsilon(n)^{-2s-2i\tau}\notag\\
&\ll_{\eta} \delta X_{1}^{2-2\sigma+\eta}+X_{1}^{\frac{3}{2}-2\sigma+\eta}
\end{align}
for $\tau\in S_{T}(\delta)$ and $\sigma=\Re{s}>3/4$.  

Summarizing \eqref{divuniv}--\eqref{l2}, we see that 
\begin{align}\label{l5}
\begin{split}
&\log{f(s)}-\log{Z_{\Gam}(s+i\tau)}-L_{\Gam}(\sigma+it;\cT\cap [X_{2},\infty)) 
 \ll_{\eta} X_{1}^{\frac{3}{2}-2\sigma+\eta}+\delta  X_{2}^{2-2\sigma+\eta},
\end{split}
\end{align}
if $\tau\in S_{T}(\delta)$ and $\sigma>3/4$.
Now, choose a sufficiently large $X_{3}>X_{2}$ 
and let $S'_{T}(\delta)$ be the set of $\tau\in S_{T}(\delta)$ satisfying 
\begin{align*}
\left| L_{\Gam}(s+i\tau;\cT\cap [X_{2},X_{3}))\right| &<\left(  
\sum_{n\in \cT,n>X_{2}}m_{\Gam}(n)^{2}\Delta(n)^{2}\epsilon(n)^{-4\sigma}\right)^{1/4}
\ll_{\eta} X_{2}^{\frac{3}{4}-\sigma+\eta}< X_{1}^{\frac{3}{4}-\sigma+\eta}.
\end{align*}
Then, if $\tau\in S_{T}'(\delta)$, we see that, for any $\epsilon>0$, 
there exist a sufficiently large $X_{1}>0$ and a sufficiently small $\delta>0$ such that
\begin{align}
\left| \log{f(s)}-\log{Z_{\Gam}(s+i\tau)}-L_{\Gam}(\sigma+it;\cT\cap [X_{3},\infty)) \right|<\frac{1}{2}\epsilon.
\end{align}
Due to Proposition \ref{mu}, we can estimate the measure of $S'_{T}(\delta)$ by 
\begin{align}\label{ST}
\frac{1}{T}\mu\left(S'_{T}(\delta)\right)>\frac{1}{2}(2\delta)^{\#\cT\cap[3,X_{2})}=:\epsilon_{1}.
\end{align}

The remaining part of this proof is to estimate the measure of $\tau$ 
such that $$L_{\Gam}(s+i\tau;\cT\cap[X_{3},\infty))$$ is small enough. 
Let $U$ be a bounded rectangle with $K\subset U \subset\{\frac{5}{6}<\Re{s}<1\}$, 
not including the zeros of $Z_{\Gam}(s)$. 
Put $\disp d:=\min_{z\in \partial{U}}\min_{s\in K} |s-z|$ and $\epsilon_{2}:=\min{(\frac{\epsilon}{2},\epsilon_{1})}>0$. 
According to (2) of Lemma \ref{lemsq}, we can choose sufficiently large $X_{3},T$ such that 
$$
\frac{1}{T}\int_{1}^{T}|L_{\Gam}(\sigma+it;\cT\cap[X_{3},\infty))|^{2}dt<\frac{d^{2}\pi}{2 \mu(U)}\epsilon_{2}^{3}.
$$ 
We then obtain 
\begin{align}
 \mu\left\{\tau\in [0,T] \Bigdivset \max_{s\in K}|L_{\Gam}(s+i\tau;\cT\cap[X_{3},\infty))| <\epsilon_{2}\right\}
 >\left(1-\frac{1}{2}\epsilon_{2}\right)T \label{L1}
\end{align}
from Lemma \ref{lemfunc}. 
The universality theorem  
\begin{align*}
 \mu\left\{\tau\in [0,T] \Bigdivset \max_{s\in K}|\log{f(s)}-\log{Z_{\Gam}(s+i\tau)}| <\epsilon\right\}
 >\frac{1}{2}\epsilon_{2}T
\end{align*}
of $\log{Z_{\Gam}(s)}$ follows from \eqref{ST} and \eqref{L1}. 
\qed

\section{Proof of Theorem \ref{thm2}}

In this section, we prove Theorem \ref{thm2}. 
Before proving it, 
we prepare subsets of $\cT$, 
which are disjoint to each other and 
generate Dirichlet series satisfying the packing condition \eqref{packing}. 

\subsection{Subsets of $\cT$ and partial Dirichlet series}
Recall that $r\geq1$ is an integer, $\Gam_{1},\dots,\Gam_{r}$ are congruence subgroups of $\sz$
and \begin{align*}
\hat{T}_{j}:=\left\{n\in \Tr{\Gam_{j}} \divset \text{$n\not\in \Tr{\Gam_{i}}$ for $1\leq i\leq j-1$}\right\} 
=\Tr{\Gam_{j}}\Big\bsla \bigg( \bigcup_{1\leq i\leq j-1} \Tr{\Gam_{i}} \bigg)
\end{align*}
for $1\leq j\leq r$. 
Put $\cT_{j}:=\cT\cap \Tr{\Gam_{j}}$, 
\begin{align*}
\bcT_{j}:=\cT\cap \hat{T}_{j}=\cT_{j}\Big\bsla \bigg( \bigcup_{1\leq i\leq j-1} \cT_{i} \bigg) ,\qquad 
\ccT_{j}:=\cT_{j}\bsla \bcT_{j}=\cT_{j}\bigcap \bigg( \bigcup_{1\leq i\leq j-1} \cT_{i} \bigg).
\end{align*}
It is easy to see that $\bcT_{i}\cap\bcT_{j}=\emptyset$ for $i\neq j$,  
$\bcT_{j}\cap \ccT_{j}=\emptyset$ and 
\begin{align*}
\bigsqcup_{1\leq i\leq j}\bcT_{i}=\bigcup_{1\leq i\leq j}\cT_{i},
\qquad \bcT_{j}\bigsqcup \ccT_{j}=\cT_{j}.
\end{align*}
We now prove the following lemma.
\begin{lem} \label{partial} Let $\Gam_{1},\dots,\Gam_{r}$ be congruence subgroups of $\sz$.  
If $\hat{T}_{j}\neq \emptyset$, then 
the series 
\begin{align}\label{packlem}
\sum_{n\in \bcT_{j}}m_{\Gam_{j}}(n)\Delta(n)\epsilon(n)^{-2s}
\end{align}
satisfies the packing condition  \eqref{packing}.
\end{lem}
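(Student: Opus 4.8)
The plan is to verify the packing condition \eqref{packing} head‑on for the sequence $\Lambda=\{2\log\epsilon(n):n\in\bcT_j\}$ carrying the positive coefficients $a_{2\log\epsilon(n)}=m_{\Gam_j}(n)\Delta(n)$, by producing a lower bound for the coefficient mass sitting in the short windows that \eqref{packing} prescribes. Since the series is $\sum m_{\Gam_j}(n)\Delta(n)\,\epsilon(n)^{-2s}=\sum a_\lambda e^{-\lambda s}$ with $e^{\lambda}=\epsilon(n)^2$, writing $V:=e^{x}$ turns $N(x\pm c/x^2)-N(x)$ into the sum of $m_{\Gam_j}(n)\Delta(n)$ over $n\in\bcT_j$ with $\epsilon(n)^2$ in a window $[V,\,V(1+c/\log^2V)]$ (or its left translate; both signs are handled identically by monotonicity of $N$). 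Because $\epsilon(n)=n+O(n^{-1})$, this window pulls back to an $n$‑interval $I_N:=[N,\,N+c'N/\log^2N]$ with $N\asymp\sqrt V$ and $c'\asymp_c1$, while the target $e^{(1-\eta)x}=V^{1-\eta}$ becomes $N^{2-2\eta}$. As $\Delta(n)\ge1$, it thus suffices to show
\[
\sum_{n\in\bcT_j\cap I_N}m_{\Gam_j}(n)\gg_{c,\eta}N^{2-\eta}\qquad(N\tinf).
\]

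First I would extract a positive‑density progression inside $\hat T_j$. Since $\Gam_1,\dots,\Gam_r$ are congruence subgroups, each $\Tr{\Gam_i}$ is a union of residue classes modulo a common modulus $M$ (as in $\Tr{\bar\Gam(N)}=\{n\ge3:n\equiv\pm2\bmod N^2\}$), so $\hat T_j=\Tr{\Gam_j}\setminus\bigcup_{i<j}\Tr{\Gam_i}$ is again a union of residue classes mod $M$. The hypothesis $\hat T_j\neq\emptyset$ from \eqref{condition} then forces $\hat T_j$ to contain a full arithmetic progression $\mathcal A=\{n\equiv a\bmod M\}$, whence $\#(\mathcal A\cap I_N)\gg_M N/\log^2N$. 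Because $\mathcal A\subseteq\hat T_j$ gives $\mathcal A\cap\cT\subseteq\bcT_j$, and because part (2) of Lemma \ref{lemlin} bounds the number of $n\le2N$ with $n\notin\cT$ by $O(N^{1/2+\eta})=o(N/\log^2N)$, I obtain
\[
\#\!\left(\bcT_j\cap\mathcal A\cap I_N\right)\gg_M \frac{N}{\log^2N}.
\]

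The remaining ingredient is the pointwise lower bound $m_{\Gam_j}(n)\gg_\eta n^{1-\eta}$ for $n\in\Tr{\Gam_j}$, the exact companion of the upper bound in Lemma \ref{lemclass}. This should follow from the expression of $m_{\Gam_j}(n)$ as a positive sum of narrow‑sense class numbers of indefinite binary quadratic forms of discriminant $n^2-4$ (\cite{Sa1,AKN,HashLS}): the class number formula gives $h(n^2-4)\gg\sqrt{n^2-4}\,L(1,\chi_{n^2-4})/\log\epsilon(n)$, and since $\epsilon(n)\le n$ the relevant regulator is $\le\log\epsilon(n)\ll\log n$, while Siegel's theorem yields $L(1,\chi_{n^2-4})\gg_\eta n^{-\eta}$; together these give $m_{\Gam_j}(n)\gg_\eta n^{1-\eta}$. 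Feeding this into the previous count,
\[
\sum_{n\in\bcT_j\cap I_N}m_{\Gam_j}(n)\ \ge\!\!\sum_{n\in\bcT_j\cap\mathcal A\cap I_N}\!\! m_{\Gam_j}(n)\ \gg_\eta\ N^{1-\eta}\cdot\frac{N}{\log^2N}\ \gg\ N^{2-2\eta},
\]
which is exactly the bound needed to establish \eqref{packing}.

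The hard part will be the pointwise class‑number lower bound for a general congruence subgroup: one must check that the congruence conditions cutting out $\Gam_j$ do not thin the class‑number sum below order $n^{1-\eta}$ on its support $\Tr{\Gam_j}$, i.e. that $m_{\Gam_j}(n)$ still captures a full‑order piece of $h(n^2-4)$. The appeal to Siegel's theorem also makes the implied constant ineffective, which is harmless since \eqref{packing} only requires $\gg_{c,\eta}$. A secondary point needing care is the assertion that $\Tr{\Gam_i}$ is genuinely a union of residue classes for every congruence subgroup, so that $\hat T_j\neq\emptyset$ really does produce a positive‑density progression and not merely a finite exceptional set.
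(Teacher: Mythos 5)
Your overall strategy (locate a positive-density arithmetic progression inside $\hat{T}_{j}$, discard the sparse complement of $\cT$ via Lemma \ref{lemlin}(2), and then lower-bound the coefficient mass in the short window) is coherent, and your bookkeeping translating the packing condition into a statement about $n$-intervals of length $\asymp N/\log^{2}N$ with target $N^{2-2\eta}$ is correct. But the argument has a genuine gap at its quantitative core: the pointwise lower bound $m_{\Gam_{j}}(n)\gg_{\eta}n^{1-\eta}$ for every $n\in\Tr{\Gam_{j}}$ is asserted, not proved, and you yourself flag it as ``the hard part''. For the full modular group it follows from Siegel's theorem applied to $h(n^{2}-4)$, but for a general congruence subgroup $m_{\Gam_{j}}(n)$ is only a sub-sum of the class numbers attached to the discriminants $(n^{2}-4)/f^{2}$, and it is not clear that the terms which survive the congruence conditions always contribute a quantity of order $n^{1-\eta}$: a priori the only classes of trace $n$ meeting $\Gam_{j}$ could correspond to a non-maximal discriminant $(n^{2}-4)/f^{2}$ with $f$ large, whose class number is only $\gg_{\eta}(n/f)^{1-\eta}$. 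Without this input the window sum is not controlled and \eqref{packing} does not follow. A secondary unproved assertion is that each $\Tr{\Gam_{i}}$ is exactly a union of residue classes modulo a common modulus (the paper only states this informally in the introduction), which is what you need to upgrade $\hat{T}_{j}\neq\emptyset$ to a full arithmetic progression rather than, say, a nonempty but thin set.

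The paper's proof avoids both issues by working at the level of group elements rather than traces. It fixes a principal congruence subgroup $\bar{\Gam}(N)$ normal and of finite index in every $\Gam_{i}$, shows that the set $\hat{\Gam}_{j}$ of elements of $\Gam_{j}$ whose traces avoid $\Tr{\Gam_{1}},\dots,\Tr{\Gam_{j-1}}$ is a union of $\bar{\Gam}(N)$-cosets stable under $\Gam_{j}$-conjugation, and hence contains the full preimage of a conjugacy class $[g]$ of the finite group $\Gam_{j}/\bar{\Gam}(N)$. The Chebotarev-type prime geodesic theorem of Sarnak and Sunada then counts the primitive classes with $\gam\bar{\Gam}(N)=[g]$ and $N(\gam)$ in the prescribed short window with main term $C\li(x)$, $C>0$, and a power-saving error term, which yields the packing condition directly --- no class-number lower bound, no Siegel, and no need to know the precise arithmetic structure of the trace sets. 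If you want to salvage your route you would have to actually prove the pointwise (or at least window-averaged) lower bound for $m_{\Gam_{j}}(n)$; the Chebotarev-type count is exactly the averaged substitute the paper uses in its place.
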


\begin{proof} 
Let $N\geq1$ be the integer such that the principal congruence subgroup $\bar{\Gam}(N)$ 
is a normal subgroup of $\Gam_{1},\dots,\Gam_{r}$ of finite index. 
Denote by $\hat{\Gam}_{j}:=\{g\in\Gam_{j} \divset \text{$\tr{\gam}\not\in \Tr{\Gam}_{i}$ for $1\leq i\leq j-1$}\}$, 
i.e. $\hat{\Gam}_{j}$ is a subset of $\Gam_{j}$ with $\Tr{\hat{\Gam}_{j}}=\hat{T}_{j}$. 
We first prove that $g\bar{\Gam}(N)\subset \hat{\Gam}_{j}$ holds for $g\in \hat{\Gam}_{j}$. 
Assume that it does not hold, namely 
there exist $1\leq i\leq j-1$, $\alpha \in\bar{\Gam}(N)$ and $\beta\in \Gam_{i}$ 
such that $\tr{g\alpha}=\tr{\beta}$. 
Since $\alpha \in\bar{\Gam}(N)$, it holds $\tr{g}\equiv \tr{\beta}\bmod{N}$ 
and then there exists $\alpha_{1} \in\bar{\Gam}(N)$ such that $\tr{g}=\tr{\beta\alpha_{1}}\in \Tr{\Gam_{i}}$. 
This contradicts $g\in \hat{\Gam}_{j}$.

When $\hat{T}_{j}\neq \emptyset$, there exists an element $g\in\hat{\Gam}_{j}$. 
It is clear that $h^{-1}gh\in\hat{\Gam}_{j}$ holds for any $h\in\Gam_{j}$ 
and $h^{-1}gh\bar{\Gam}(N)\subset \hat{\Gam}_{j}$ also holds.  
This means that there exists a conjugacy class $[g]$ of the finite group $\Gam_{j}/\bar{\Gam}(N)$ 
such that  
$$
\sum_{\begin{subarray}{c} n\in \hat{T}_{j} \\  Y_{1}\leq n< Y_{2} \end{subarray}}m_{\Gam_{j}}(n)\Delta(n)
\geq 
 \sum_{\begin{subarray}{c} \gam\in \Prim(\Gam_{j}) \\ 
\gam\subset\hat{\Gam}_{j} \\ \epsilon(Y_{1})^{2}\leq N(\gam)<\epsilon(Y_{2})^{2}\end{subarray}}
\frac{1}{1-N(\gam)^{-1}}
\geq
 \sum_{\begin{subarray}{c} \gam\in \Prim(\Gam_{j}) \\ 
\gam\bar{\Gam}(N)=[g] \\ \epsilon(Y_{1})^{2}\leq N(\gam)<\epsilon(Y_{2})^{2}\end{subarray}}
\frac{1}{1-N(\gam)^{-1}}.
$$
We can check that the series \eqref{packlem} satisfies the packing condition \eqref{packing} 
from the following variant of the prime geodesic theorem, 
called the Chebotarev-type prime geodesic theorem \cite{Sa1,Sunada}. 
$$
\#\{ \gam\in \Prim(\Gam_{j}) \divset \gam\bar{\Gam}(N)= [g], N(\gam)<x \} \\
=C\li(x)+O(x^{a}), \as x\tinf, 
$$
where $0<C\leq 1$ and $1/2<a<1$ are constants 
depending on $\Gam_{j},\bar{\Gam}(N)$ and $g$.  
\end{proof}

\subsection{Proof of Theorem \ref{thm2}}

Let $X_{1}>0$ and 
divide $\log{f_{j}(s)}-\log{Z_{\Gam_{j}}(s+i\tau)}$ by 
\begin{align*}
\log{f_{j}(s)}-\log{Z_{\Gam_{j}}(s+i\tau)}
=&\left(\log{f_{j}(s)}+L_{\Gam_{j}}(s;[3,X_{1}))+ L_{\Gam_{j}}(s+i\tau;\cT\cap [X_{1},\infty)) \right)\\
&+\left(L_{\Gam_{j}}(s+i\tau;[3,X_{1}))-L_{\Gam_{j}}(s;[3,X_{1})) +L_{\Gam_{j}}(s+i\tau;\bar{\cT}\cap [X_{1},\infty))\right).
\end{align*}

First, study the case $j=1$. Since the series 
$$\sum_{\begin{subarray}{c}n\in \cT_{1}\end{subarray}}m_{\Gam_{1}}(n)\Delta(n)\epsilon(n)^{-2s}$$ 
satisfies the packing condition \eqref{packing}, 
applying Proposition \ref{appr} with 
$$g_{1}(s)=\log{f_{1}(s)}+L_{\Gam_{1}}(s;[3,X_{1})),$$ 
we see that there exist $X_{2}^{(1)}>X_{1}$ and 
a series $\{\theta_{n}^{(1)}\}_{n\in \bcT_{j}\cap[X_{1},X_{2}^{(1)})}\subset [0,1)$ 
such that 
\begin{align*}
&\left| \log{f_{1}(s)}+L_{\Gam_{1}}(s;[3,X_{1}])
-L_{\Gam_{1}}(s;\bcT_{1}\cap[X_{1},X_{2}^{(1)});\{\theta_{n}^{(1)}\}) \right|\\
\ll &\sum_{\begin{subarray}{c}n\in \bcT_{1}\\ X_{1}\leq n<X_{2}^{(1)}\end{subarray}}m_{\Gam_{1}}(n)^{2}\Delta(n)^{2}\epsilon(n)^{-4\sigma} 
\ll_{\eta} X_{1}^{3-4\sigma+\eta}.
\end{align*}
Put $\cU_{1}:=\bcT_{1}\cap[X_{1},X_{2}^{(1)})$ and define the series $\{\btheta_{n}^{(1)}\}_{n\in \cT}$ by 
$\btheta_{n}^{(1)}=\theta_{n}^{(1)}$ if $n\in \cU_{1}$  and $\btheta_{n}^{(1)}=0$ otherwise. 

Next, we fix the value $X_{2}^{(j)}>X_{1}$, the series $\{\theta_{n}^{(j)}\}$, $\{\btheta_{n}^{(j)}\}$ 
and the set $\cU_{j}$ for $2\leq j \leq r$ recursively as follows. 
Suppose that $X_{2}^{(i)}$, $\{\theta_{n}^{(i)}\}$, $\{\btheta_{n}^{(i)}\}$ 
and $\cU_{i}$ are already given for $1\leq i \leq j-1$.
Due to Lemma \ref{partial}, we see that 
the series 
$$\sum_{\begin{subarray}{c}n\in \bcT_{j}\end{subarray}}m_{\Gam_{j}}(n)\Delta(n)\epsilon(n)^{-2s}$$ 
satisfies the packing condition \eqref{packing}.
Then, applying Proposition \ref{appr} with 
$$g_{j}(s)=\log{f_{j}(s)}+L_{\Gam_{j}}(s;[3,X_{1}))
-L_{\Gam_{j}}(s;\ccT_{j}\cap\cU_{j-1};\{\btheta_{n}^{(j-1)}\}),$$ 
we see that there exist a value $X_{2}^{(j)}>X_{1}$ and a series 
$\{\theta_{n}^{(j)}\}_{n\in \bcT_{j}\cap[X_{1},X_{2}^{(j)})}\subset [0,1)$ 
such that 
\begin{align*}
&\left| \log{f_{j}(s)}+L_{\Gam_{j}}(s;[3,X_{1}))
-L_{\Gam_{j}}(s;\ccT_{j}\cap\cU_{j-1};\{\btheta_{n}^{(j-1)}\}) 
-L_{\Gam_{j}}(s;\bcT_{j}\cap [X_{1},X_{2}^{(j)});\{\theta_{n}^{(j)}\}) \right|\\
&\ll \sum_{\begin{subarray}{c}n\in \bcT_{j}\\ X_{1}\leq n<X_{2}^{(j)}\end{subarray}}m_{\Gam_{j}}(n)^{2}\Delta(n)^{2}\epsilon(n)^{-4\sigma} 
\ll_{\eta,j} X_{1}^{3-4\sigma+\eta}.
\end{align*} 
Put $\cU_{j}:=\cU_{j-1}\coprod \left(\bcT_{j}\cap [X_{1},X_{2}^{(j)})\right)$  
and define the series $\{\btheta_{n}^{(j)}\}_{n\in \cT}$ by 
$\btheta_{n}^{(j)}=\theta_{n}^{(j)}$ if $n\in \bcT_{j}\cap[X_{1},X_{2}^{(j)})$  and $\btheta_{n}^{(j)}=\btheta_{n}^{(j-1)}$ otherwise. 
Note that $\btheta_{n}^{(r)}=\theta_{n}^{(j)}$ if $n\in \bcT_{j}\cap [X_{1},X_{2}^{(j)})$ for some $1\leq j\leq r$ 
and $\btheta_{n}^{(r)}=0$ otherwise.  

We also put $\{\btheta_{n}\}_{n\in \cT}:=\{\btheta_{n}^{(r)}\}_{n\in \cT}$, 
$\cU:=\cU_{r}\coprod \left(\cT \cap [3,X_{1})\right)$ and  
\begin{align*}
S_{T}=S_{T}(\delta,\cU):=& \left\{\tau\in[0,T] \Bigdivset \left\| \frac{\tau\log{\epsilon(n)}}{\pi}-\btheta_{n} \right\| <\delta 
\quad \text{for any $n\in \cU$} \right\}
\end{align*}
for $\delta>0$ and $T>0$. 
Then, if $\tau\in S_{T}$, we have 
\begin{align}
&L_{\Gam_{j}}\left(s+i\tau; \bcT_{j}\cap [X_{1}, X_{2}^{(j)})\right)
-L_{\Gam_{j}}\left(s; \bcT_{j}\cap [X_{1}, X_{2}^{(j)});\{\btheta_{n}\})\right) \notag \\ 
& = \sum_{\begin{subarray}{c}n\in \bcT_{j}\\ X_{1}\leq n<X_{2}^{(j)}\end{subarray}}m_{\Gam_{j}}(n)\Delta(n)\epsilon(n)^{-2s}
(\epsilon(n)^{-2i\tau}-e(\btheta_{n})) 
\ll_{\eta,j} \delta \left(X_{2}^{(j)}\right)^{2-2\sigma+\eta}, \label{l11} \\ 
&L_{\Gam_{j}}\left(s+i\tau;\ccT_{j}\cap\cU_{j-1}\right)
-L_{\Gam_{j}}\left(s;\ccT_{j}\cap\cU_{j-1}; \{ \btheta_{n} \} \right) \notag \\ 
& = \sum_{\begin{subarray}{c}n\in \ccT_{j}\cap\cU_{j-1}\end{subarray}}m_{\Gam_{j}}(n)\Delta(n)\epsilon(n)^{-2s}
(\epsilon(n)^{-2i\tau}-e(\btheta_{n})) 
\ll_{\eta,j} \delta \left(\bX_{2}^{(j-1)}\right)^{2-2\sigma+\eta},\label{l12}
\end{align}
where $\disp \bX_{2}^{(j-1)}:=\max_{1\leq l\leq j-1}X_{2}^{(l)}$.
We also have  
\begin{align}
&L_{\Gam_{j}}(s+i\tau;[3,X_{1}))-L_{\Gam_{j}}(s;[3,X_{1})) 
+L_{\Gam_{j}}(s+i\tau;\bar{\cT}\cap[X_{1},\infty)) \notag\\
& \ll \sum_{n\in \bZ,3\leq n<X_{1}}m_{\Gam_{j}}(n)\Delta(n)\epsilon(n)^{-2s}| \epsilon(n)^{-2i\tau}-1|
+\sum_{n\not\in \cT,n\geq X_{1}} m_{\Gam_{j}}(n)\Delta(n)\epsilon(n)^{-2\sigma} \notag \\
& \ll_{\eta,j} \delta X_{1}^{2-2\sigma+\eta}+X_{1}^{\frac{3}{2}-2\sigma+\eta}.\label{l31}
\end{align}
if $\sigma>3/4$ and $\tau\in S_{T}$.
Summarizing \eqref{l11}--\eqref{l31}, if $\tau\in S_{T}$, we have 
\begin{align}
&\log{f_{j}(s)}-\log{Z_{\Gam_{j}}(s+i\tau)}-L_{\Gam_{j}}\left(s+i\tau;\bcT_{j}\cap [X_{2}^{(j)},\infty)\right)
-L_{\Gam_{j}}\left(s+i\tau;\ccT_{j}\cap [X_{1},\infty) \bsla \cU_{j-1}\right) \notag \\
&\ll_{\eta,j} \delta \left( \left(X_{2}^{(j)}\right)^{2-2\sigma+\eta} 
+\left(\bX_{2}^{(j-1)}\right)^{2-2\sigma+\eta}+ X_{1}^{2-2\sigma+\eta}\right)
+X_{1}^{\frac{3}{2}-2\sigma+\eta}.\label{l51}
\end{align} 

Now, choose a sufficiently large $\disp X_{3}>\max_{1\leq j\leq r}X_{2}^{(j)}$ and  
let 
\begin{align*}
\bar{\cU}_{j}=\bar{\cU}_{j}(X_{1},X_{3}):=\left( \bcT_{j}\cap [X_{2}^{(j)},X_{3})\right)
\coprod \left( \ccT_{j}\cap [X_{1},X_{3})  \bsla \cU_{j-1}\right).  
\end{align*}
Define the set $S^{(j)}_{T}$ of $\tau\in S_{T}$ satisfying 
\begin{align*}
\left| L_{\Gam_{j}}\left(s+i\tau;\bar{\cU}_{j} \right)\right| 
&< \left(\sum_{n\in \bar{\cU}_{j}}m_{\Gam_{j}}(n)^{2}\Delta(n)^{2}\epsilon(n)^{-4\sigma}
\right)^{1/4} \\ 
& \ll_{\eta,j} \left(  
\sum_{n\in \cT,n>X_{1}}m_{\Gam_{j}}(n)^{2}\Delta(n)^{2}\epsilon(n)^{-4\sigma}
\right)^{1/4} \ll 
X_{1}^{3/4-\sigma+\eta}. 
\end{align*}
This means that, if $\tau\in S^{(j)}_{T}$, for any $\epsilon>0$, 
there exist $X_{1},\delta$ such that
\begin{align}\label{jeps}
\left| \log{f_{j}(s)}-\log{Z_{\Gam_{j}}(s+i\tau)}-L_{\Gam_{j}}(s+i\tau;\cT\cap[X_{3},\infty)) \right|<\frac{1}{2}\epsilon.
\end{align}
Since $\cU,\bar{\cU}_{j}$ are disjoint and  $\disp \min\{n\in \bar{\cU}_{j}\}> X_{1}$, 
 due to (2) of Proposition \ref{mu}, we see that 
\begin{align}\label{Sj}
\mu\left(S_{T}^{(j)}\right)>\left(1-\frac{1}{2r}\right)\mu\left(S_{T}\right) 
\end{align}
if $X_{1}$ is sufficiently large. 
According to \eqref{Sj} and (1) of Proposition \ref{mu}, 
we have 
\begin{align}\label{jj}
\frac{1}{T}\mu\left(\bigcap_{1\leq j\leq r} S^{(j)}_{T}\right)
>\frac{1}{2}\frac{\mu(S_{T})}{T}  
>\frac{1}{2}(2\delta)^{\#\cU}=:\epsilon_{1} 
\end{align}
for sufficiently large $T>0$. 
Thus the set of $\tau\in[0,T]$ satisfying \eqref{jeps} for $1\leq j\leq r$ simultaneously has a positive measure.

The remaining part of this proof is to study $L_{\Gam_{j}}(s+i\tau;\cT\cap[X_{3},\infty))$. 
Let $U$ be a bounded rectangle with $\cup_{1\leq j\leq r}K_{j}\subset U \subset\{\frac{5}{6}<\Re{s}<1\}$, 
not including the zeros of $Z_{\Gam_{j}}(s)$. 
Put $\disp d:=\max_{1\leq j\leq r}\min_{z\in \partial{U}}\min_{s\in K_{j}} |s-z|$ 
and $\epsilon_{2}:=\min{(\frac{\epsilon}{2},\epsilon_{1})}>0$. 
According to Lemma \ref{lemsq}, we see that there exist $X_{3},T$ such that 
$$
\frac{1}{T}\int_{1}^{T}|L_{\Gam_{j}}(\sigma+it;\cT\cap [X_{3},\infty))|^{2}dt<\frac{d^{2}}{2\pi \mu(U)}\epsilon_{2}^{3}
$$
for any $1\leq j\leq r$. 
We then obtain 
\begin{align}
 \mu\left\{\tau\in [0,T] \Bigdivset \max_{1\leq j\leq r}\max_{s\in K}|L_{\Gam_{j}}(s+i\tau;\cT\cap[X_{3},\infty))| <\epsilon_{2}\right\}
 >\left(1-\frac{1}{2}\epsilon_{2}\right)T \label{L11}
\end{align}
from Lemma \ref{lemfunc},
The desired result 
\begin{align}\label{log1}
 \mu\left\{\tau\in [0,T] \Bigdivset \max_{1\leq j\leq r}\max_{s\in K_{j}}|\log{f_{j}(s)}-\log{Z_{\Gam_{j}}(s+i\tau)}| <\epsilon\right\}
 >\frac{1}{2}\epsilon_{2}T
\end{align}
follows from \eqref{jj} and \eqref{L11}. 
\qed

\vcpt

\noi{\bf Acknowledgment.} 
The author was supported by JST CREST no.JPMJCR2113 
and JSPS Grant-in-Aid for Scientific Research (C) no. 22K03234.

\end{document}